\newcommand{\on}{\operatorname}
\newtheorem{fact}{Fact}[section]
\newtheorem{corollary}{Corollary}[section]
\newtheorem{remark}{Remark}[section]
\newtheorem{example}{Example}[section]
\newtheorem{proposition}{Proposition}[section]
\newtheorem{theorem}{Theorem}[section]
\theoremstyle{definition}
\newtheorem{df}{Definition}[section]
\author{Marek Balcerzak}
\address{Institute of Mathematics, Lodz University of Technology, al. Politechniki 8, 93-590
\L\'od\'z, Poland}
\email{marek.balcerzak@p.lodz.pl}
\author{\v Lubica Hol\'a}
\address{Academiy of Sciences, Institute of Mathematics, \v{S}tef\'anikova 49, 814 73 Bratislava, Slovakia}
\email{hola@mat.savba.sk}
\author{Du\v{s}an Hol\'y}
\address{Department of Mathematics and Computer Science, Faculty of Education, Trnava University, Priemyseln\'a, 918 43 Trnava, Slovakia}
\email{dusan.holy@truni.sk}
\title{Properties of equi-Baire~1 and equi-Lebesgue families of functions}
\subjclass[2020]{54H05, 26A21}
\keywords{equi-Baire~1 families, equi-Lebesgue families, pointwise convergence, uniform convergence, equi-cliquishness, equi-continuity points}
\date{}
\begin{document}

\begin{abstract}

We study several properties of equi-Baire~1 families of functions between metric spaces.
We consider the related equi-Lebesgue property for such families. We examine the behaviour
of equi-Baire~1 and equi-Lebesgue families with respect to pointwise and uniform convergence.
In particular, we obtain a criterion for a choice of a uniformly convergent subsequence from a sequence of functions that form an equi-Baire~1 family, which solves a problem posed in \cite{BKS}. Finally, we
discuss the notion of equi-cliquishness and relations between equi-Baire~1 families and sets of equi-continuity points.

\end{abstract}

\maketitle
\section{Introduction}
The notion of equi-Baire~1 families of functions introduced by Lecomte \cite{Le} and
rediscovered by Alikhani-Koopaei \cite{Al} was motivated  by the epsilon-delta
characterization of Baire~1 functions due to Lee, Tang and Zhao \cite{LTZ}.
Another old characterization of Baire~1 functions, due to Lebesgue \cite{Leb}, turns out useful in this context since, in some general cases, equi-Baire~1 families coincide with equi-Lebesgue ones.
The notion of equi-Lebesgue family is introduced in our paper, however it appeared without this name in \cite[Proposition 32]{Le} and \cite[Theorem~3.6]{Al}).

Equi-Baire~1 families of functions have been recently studied in \cite{BKS}. In our paper we continue these studies. We weaken some assumptions in several results of \cite{BKS}. We consider properties concerning pointwise and uniform convergence. In particular, we present a complete solution of the following Question~4.3 posed in \cite{BKS}:

\begin{quotation}
Given a uniformly bounded sequence $(f_n)$ of real-valued Baire~1 functions defined on a Polish space $X$, assume that the family $\{f_n:n \in \mathbb N\}$ is
equi-Baire~1. What more should be assumed to guarantee that $(f_n)$ contains a subsequence which is uniformly convergent on $X$?
\end{quotation}
Then we investigate other related properties.
We prove the following result: Let $(X,d_X)$ be a separable Baire metric space and $(Y,d_Y)$  be a  metric space. If  $\mathcal F$ is an equi-Baire~1  family of functions from $X$ to $Y$, then its set
$EC(\mathcal F)$ of equi-continuity points is dense $G_\delta$.
Also we consider the reverse situation where functions from a $G_{\delta}$ set $H\subset X$ to $Y$, that are equi-continuous on $H$, can be extended to functions from $X$ to $Y$ that form an
equi-Baire~1 family.

In what follows let $\mathbb N$ be the set of positive integers, $\mathbb R$ be the space of real numbers with the usual Euclidean metric and  $\mathbb R^+$  be the set of positive reals.

If $(Y,d)$ is a metric space, the open $d$-ball with center $y_0\in Y$ and radius $\varepsilon
>0$ will be denoted by $S(y_0,\varepsilon)$ and the closed $d$-ball with center $y_0\in Y$ and radius $\varepsilon>0$ will be denoted by $B(y_0,\varepsilon)$.  For a nonempty set $A\subset Y$, the symbol $S(A,\varepsilon)$ denotes the generalized open ball $\{y\colon \on{dist}(y,A)<\varepsilon\}$. The closure of $A\subset Y$ will be denoted by $\overline{A}$. Given $\varepsilon > 0$, we say that a set $A \subset Y$ is $\varepsilon$-discrete, if  $d(x,y) \ge \varepsilon$ for any distinct $x, y \in A$.
A topological space is called Polish if it is separable and completely metrizable.

The paper is organized as follows. In Section~1 we recall two characterizations of Baire~1 functions between metric spacess $X$ and $Y$, and establish minimal assumptions on $X$ and $Y$ under which these characterizations work. In Section~2 we define equi-Baire~1 and equi-Lebesgue families of functions and discuss relations between them. Sections~4 and 5 are devoted to results concerning
the behaviour of equi-Baire~1 and equi-Lebesgue families with respect to pointwise and uniform convergence, respectively. The final Section~6 contains other related results.
 We
discuss the notion of equi-cliquishness and relations between equi-Baire~1 families and sets of equi-continuity points.

\section{Two characterizations of Baire~1 functions}

A function $f$ between metric spaces $X$ and $Y$ is called Baire~1 if is $F_\sigma$-measurable, that is, whenever the preimage of any open set is $F_\sigma$ (cf. \cite[Def. 24.1]{Kech}).
Several characterizations of Baire~1 functions are known; some of them are due to Baire, see \cite[Thm 24.15]{Kech}. Usually some additional properties of $X$ and $Y$ are assumed. We will focus on two characterizations: one classical characterization is due to Lebesgue \cite{Leb}, and the other one was proposed quite recently in \cite{LTZ} and \cite{FC}.

The Lebesgue characterization was originally formulated for real-valued Borel measurable functions of arbitrary class $\alpha >0$. A more general version can be found in \cite{Ku}. Below, we present it
for $\alpha=1$, in a bit modified form, and we give its proof for a reader's convenience.

\begin{theorem} \label{HL1} \rm{(cf. \cite[p. 375]{Ku})}
 Let $(X,d_X)$ and $(Y,d_Y)$ be  metric spaces. For $f\colon X \to Y$
 consider the following conditions:
\begin{itemize}
\item[(1)] A function $f$ is Baire~1;
\item[(2)] For each $\varepsilon >0$ there is a cover $(X_i)_{i \in \mathbb N}$  of $X$ consisting of closed sets such that $\on{diam}f(X_i) \le\varepsilon$, for all $i \in \mathbb N$.
\end{itemize}
Then \rm{(2) $\Rightarrow$ (1)} holds. If $(Y,d_Y)$ is separable, then
\rm{(1) $\Rightarrow$ (2)} is true.

\end{theorem}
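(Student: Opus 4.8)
The plan is to prove the two implications separately, treating (2) $\Rightarrow$ (1) first since it needs no hypothesis on $Y$. Given an open set $U \subseteq Y$, I want to show $f^{-1}(U)$ is $F_\sigma$. Write $U = \bigcup_{n} F_n$ where each $F_n$ is closed in $Y$ and $F_n \subseteq U$ (possible in a metric space, e.g. $F_n = \{y : \on{dist}(y, Y \setminus U) \ge 1/n\}$). For each $n$ apply hypothesis~(2) with $\varepsilon = 1/n$ to get a closed cover $(X_i^{(n)})_{i}$ of $X$ with $\on{diam} f(X_i^{(n)}) \le 1/n$. The key observation is that if $X_i^{(n)}$ meets $f^{-1}(F_n)$, then $f(X_i^{(n)})$ lies in the $(1/n)$-neighborhood of $F_n$, hence $f(X_i^{(n)}) \subseteq U$ once $n$ is large enough relative to the "thickness" of $F_n$ inside $U$; so I would instead choose the decomposition of $U$ so that the $1/n$-neighborhood of $F_n$ still sits inside $U$ (take $F_n = \{y : \on{dist}(y, Y\setminus U) \ge 2/n\}$). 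Then $f^{-1}(U) = \bigcup_n \bigcup \{ X_i^{(n)} : X_i^{(n)} \cap f^{-1}(F_n) \neq \emptyset \}$, a countable union of closed sets, proving $f^{-1}(U)$ is $F_\sigma$.

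For (1) $\Rightarrow$ (2) under the assumption that $Y$ is separable, fix $\varepsilon > 0$. Using separability, cover $Y$ by countably many open balls $S(y_j, \varepsilon/2)$, $j \in \mathbb N$, each of diameter at most $\varepsilon$. Since $f$ is Baire~1, each set $f^{-1}(S(y_j,\varepsilon/2))$ is $F_\sigma$, say $f^{-1}(S(y_j,\varepsilon/2)) = \bigcup_k C_{j,k}$ with each $C_{j,k}$ closed in $X$. Then the countable family $\{C_{j,k} : j,k \in \mathbb N\}$ consists of closed sets, covers $X$ (because the balls cover $Y$), and satisfies $f(C_{j,k}) \subseteq S(y_j,\varepsilon/2)$, hence $\on{diam} f(C_{j,k}) \le \varepsilon$. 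This is exactly condition~(2).

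The only genuinely delicate point is the bookkeeping in (2) $\Rightarrow$ (1): one must pick the exhaustion $U = \bigcup_n F_n$ of a general open set by closed sets so that not merely $F_n \subseteq U$ but a fixed-radius enlargement of $F_n$ stays inside $U$, matching the radius to the $\varepsilon = 1/n$ used in invoking~(2); with the choice $F_n = \{y \in Y : \on{dist}(y, Y \setminus U) \ge 2/n\}$ one has $S(F_n, 1/n) \subseteq U$, and then any $X_i^{(n)}$ that hits $f^{-1}(F_n)$ is automatically contained in $f^{-1}(U)$. Everything else is routine: the separable direction is an immediate unpacking of $F_\sigma$-measurability against a countable cover of $Y$ by small balls, and no completeness or Baire-category hypothesis on either space is needed.
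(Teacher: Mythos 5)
Your proposal is correct and takes essentially the same route as the paper: the (1) $\Rightarrow$ (2) direction is identical (countable cover of $Y$ by $\varepsilon$-small balls, decompose each $F_\sigma$ preimage), and the (2) $\Rightarrow$ (1) direction rests on the same Lebesgue-number mechanism with a $1/n$-fine closed cover. The only cosmetic difference is that the paper selects the cover pieces directly by the condition $f(X_{k,n}) \subset U$ and verifies both inclusions, whereas you route the selection through the closed exhaustion $F_n = \{y : \on{dist}(y, Y \setminus U) \ge 2/n\}$; both selections yield $f^{-1}(U)$ as a countable union of closed sets.
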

\begin{proof} (1) $\Rightarrow$ (2): Let $f$ be Baire~1. Since $(Y,d_Y)$ is separable,
for a fixed $\varepsilon >0$ we can find open balls $B_1,B_2, \dots$ in $Y$ such that
$Y=\bigcup_{n\in\mathbb N} B_n$ and $\on{diam}B_n\le\varepsilon$ for all $n\in\mathbb N$.
For $n\in\mathbb N$, the set $f^{-1}(B_n)$ is of type $F_\sigma$, so it can be expressed as the union $\bigcup_{j\in\mathbb N}F_{n,j}$ of closed sets $F_{n,j}$, $j\in\mathbb N$.
Enumerate $\{F_{n,j}: n,j\in\mathbb N\}$ as $\{X_i: i\in\mathbb N\}$ and observe that the respective condition in (2) holds.

(2) $\Rightarrow$ (1): By (2), for each $k\in\mathbb N$ we find closed sets $X_{n,k}$, $n\in\mathbb N$
such that $X=\bigcup_{n\in\mathbb N}X_{k,n}$ and $\on{diam} f(X_{k,n})<1/k$ for all $n\in\mathbb N$.
Let $U$ be an open set in $Y$. Define $\mathcal F=\{X_{k,n}: f(X_{k,n})\subset U\}$.
It is enough to show that $f^{-1}(U)=\bigcup\mathcal F$ since then $f^{-1}(U)$ is of type $F_\sigma$.

Firstly, if $x\in \bigcup\mathcal F$, then $x\in X_{k,n}$  for some $k,n\in\mathbb N$ with $f(X_{k,n})\subset U$. Hence $f(x)\in U$ and so, $x\in f^{-1}(U)$.
Secondly, if $f(x)\in U$, we can find $k\in\mathbb N$ such that every $y\in Y$ with $d_Y(f(x),y)<1/k$
is in $U$. Find $n\in\mathbb N$ such that $x\in X_{k,n}$. Since $\on{diam} f(X_{k,n})<1/k$, we have $f(X_{k,n})\subset U$. Hence $x\in\mathcal F$ as desired.
\end{proof}

If a function $f$ between metric spaces $X$ and $Y$ satisfies condition (2) from Theorem \ref{HL1},
we will say that $f$ has {\em the Lebesgue property}.

Thus, by implication (2)~$\Rightarrow$~(1) in Theorem \ref{HL1}, if $f\colon X\to Y$ has the Lebesgue property, then it is Baire~1.
We observe that the separability of $Y$ is necessary for implication (1)~$\Rightarrow$~(2)
in Theorem \ref{HL1}.

\begin{proposition} \label{HLo}
Let $(Y,d_Y)$ be a metric space. The following are equivalent:
\begin{itemize}
\item[(i)] $(Y,d_Y)$ is separable;
\item[(ii)] For each metric space $(X,d_X)$, every Baire~1 function $f\colon X \to Y$ has the Lebesgue property.
\end{itemize}
\end{proposition}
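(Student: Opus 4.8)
The implication $(i)\Rightarrow(ii)$ needs no fresh argument: it is exactly the implication $(1)\Rightarrow(2)$ of Theorem~\ref{HL1}, which is valid precisely under the hypothesis that $Y$ is separable. So the whole content of the proposition is the implication $(ii)\Rightarrow(i)$, and the plan is to prove it by contraposition: assuming $(Y,d_Y)$ is \emph{not} separable, I will produce one metric space $X$ and one Baire~1 function $f\colon X\to Y$ that fails the Lebesgue property.

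The first step is the standard structural fact that a non-separable metric space admits, for some $\varepsilon\in\mathbb R^+$, an \emph{uncountable} $\varepsilon$-discrete subset $D\subset Y$. I would obtain this by choosing, for each $n\in\mathbb N$, a subset $D_n\subset Y$ maximal (with respect to inclusion) among $(1/n)$-discrete subsets of $Y$; such $D_n$ exists by Zorn's lemma, and maximality forces $D_n$ to be $(1/n)$-dense in $Y$. If every $D_n$ were countable, then $\bigcup_{n\in\mathbb N}D_n$ would be a countable dense subset of $Y$, contradicting non-separability. Hence some $D_{n_0}$ is uncountable; set $D=D_{n_0}$ and $\varepsilon=1/n_0$.

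The second step is the construction of the counterexample. Take $X=D$ with the metric inherited from $Y$. Since distinct points of $D$ are at distance $\ge\varepsilon$, every point of $X$ is isolated, so $X$ is a discrete metric space; consequently the inclusion map $f\colon X\to Y$ is continuous, hence Baire~1 (a continuous map between metric spaces is $F_\sigma$-measurable, since open subsets of a metric space are $F_\sigma$). It remains to check that $f$ violates condition (2) of Theorem~\ref{HL1}, and in fact it does so already for the single value $\varepsilon'=\varepsilon/2$: if $A\subset X$ satisfies $\on{diam}f(A)=\on{diam}A\le\varepsilon'<\varepsilon$, then $A$ contains at most one point, because any two distinct points of $D$ lie at distance $\ge\varepsilon$; therefore no countable family of such sets can cover the uncountable set $X=D$. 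Thus $f$ has no cover as in (2) for $\varepsilon'$, so it fails the Lebesgue property, and $(ii)$ is refuted.

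I do not anticipate a real obstacle. The two points calling for a little care are (a) stating and proving the ``uncountable $\varepsilon$-discrete subset'' characterization of non-separability cleanly, and (b) handling the quantifier in the definition of the Lebesgue property correctly: to refute $(ii)$ one must isolate a single bad value (here $\varepsilon/2$) and show that for that value no admissible countable closed cover exists, which the $\varepsilon$-discreteness of $D$ supplies at once.
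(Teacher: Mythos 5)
Your proof is correct and follows essentially the same route as the paper: both arguments extract an uncountable $\varepsilon$-discrete set from the non-separability of $Y$ and then exhibit an identity-type map as a Baire~1 function without the Lebesgue property. The only (cosmetic) difference is that you take $X$ to be the discrete set itself, so that admissible pieces are singletons, whereas the paper takes $X=Y$ with the identity map and derives the contradiction from a closed set meeting two of the disjoint balls $B(y_i,\varepsilon/4)$.
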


\begin{proof} We only need to show (ii) $\Rightarrow$ (i).  Suppose that $(Y,d_Y)$ is not separable. Then there is $\varepsilon > 0$ and an uncountable $\varepsilon$-discrete set $L \subset Y$.  Put $L = \{y_i: i \in I\}$. Then $\{B(y_i,\varepsilon/4): i \in I\}$ is an uncountable family of pairwise disjoint closed balls.

Let $X = Y$, $d_X = d_Y$ and $f\colon X \to Y$ be the identity function. Then $f$ is Baire~1. By (ii)  there is a cover $(X_n)_{n \in \mathbb N}$  of $X$ consisting of closed sets such that $\on{diam}f(X_n) \le\varepsilon/4$, for all $n \in \mathbb N$. There must exist $m \in \mathbb N$ such that $X_{m} \cap B(y_i,\varepsilon/4) \ne \emptyset$ for uncountable many balls $B(y_i,\varepsilon/4)$, a contradiction.
\end{proof}

Another characterization of Baire~1 functions, formulated in epsilon-delta style,
is due to Lee, Thang and Zhao \cite{LTZ}. Their result concerns the case of functions between Polish spaces. The following generalization was obtained by Fenecios and  Cabral in \cite{FC}.

\begin{theorem}\cite{FC} \label{FaC}
Let $(X,d_X)$  and $(Y,d_Y)$ be  separable metric spaces. The following are equivalent:
\begin{itemize}
\item[(1)] A function $f\colon X \to Y$ is Baire~1;
\item[(2)] For each $\varepsilon > 0$ there is a  function $\delta: X \to \mathbb R^+$ such that for any $x, y \in X$, if $d_X(x,y) < \min\{\delta(x),\delta(y)\}$, then $d_Y(f(x),f(y)) < \varepsilon$.
\end{itemize}
\end{theorem}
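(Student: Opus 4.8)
The plan is to deduce both implications from Theorem~\ref{HL1}, that is, from the Lebesgue property; separability of $X$ will be used for $(2)\Rightarrow(1)$ and separability of $Y$ for $(1)\Rightarrow(2)$.

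For $(2)\Rightarrow(1)$: fix $\varepsilon>0$ and apply (2) with $\varepsilon/3$ in place of $\varepsilon$ to obtain a gauge $\delta\colon X\to\mathbb R^+$. Put $B_n=\{x\in X:\delta(x)>1/n\}$, so that $X=\bigcup_{n\in\mathbb N}B_n$, fix a countable dense set $\{z_i:i\in\mathbb N\}$ in $X$, and set $E_{n,i}=\overline{B_n\cap S(z_i,1/(3n))}$. These sets are closed, countably many, and cover $X$ (for each fixed $n$ the balls $S(z_i,1/(3n))$, $i\in\mathbb N$, cover $X$). The crucial step is to verify $\on{diam}f(E_{n,i})\le\varepsilon$: given $x,y\in E_{n,i}$, pick sequences $(x_k)$, $(y_k)$ in $B_n\cap S(z_i,1/(3n))$ with $x_k\to x$ and $y_k\to y$; for $k$ large one has $d_X(x_k,x)<\min\{\delta(x),\delta(x_k)\}$ (since $\delta(x)>0$ is fixed, $\delta(x_k)>1/n$, and $d_X(x_k,x)\to0$), similarly $d_X(y_k,y)<\min\{\delta(y),\delta(y_k)\}$, and $d_X(x_k,y_k)\le 2/(3n)<1/n<\min\{\delta(x_k),\delta(y_k)\}$; three applications of the gauge condition together with the triangle inequality give $d_Y(f(x),f(y))<\varepsilon$. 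Then the implication $(2)\Rightarrow(1)$ of Theorem~\ref{HL1} yields that $f$ is Baire~1.

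For $(1)\Rightarrow(2)$: fix $\varepsilon>0$. Since $Y$ is separable and $f$ is Baire~1, the implication $(1)\Rightarrow(2)$ of Theorem~\ref{HL1} provides a cover $(F_n)_{n\in\mathbb N}$ of $X$ by closed sets with $\on{diam}f(F_n)\le\varepsilon/2$ for all $n$. For $x\in X$ let $n(x)=\min\{n\in\mathbb N:x\in F_n\}$, and define $\delta(x)=\on{dist}(x,F_1\cup\cdots\cup F_{n(x)-1})$ when $n(x)>1$ and $\delta(x)=1$ when $n(x)=1$; since $F_1\cup\cdots\cup F_{n(x)-1}$ is closed and does not contain $x$, we get $\delta(x)>0$, so indeed $\delta\colon X\to\mathbb R^+$. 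Now suppose $x,y\in X$ with $d_X(x,y)<\min\{\delta(x),\delta(y)\}$, and assume $n(x)\le n(y)$. If $n(y)>1$, then $d_X(x,y)<\delta(y)=\on{dist}(y,F_1\cup\cdots\cup F_{n(y)-1})$ forces $x\notin F_1\cup\cdots\cup F_{n(y)-1}$, so $n(x)\ge n(y)$ and hence $n(x)=n(y)$; if $n(y)=1$ then $n(x)=n(y)=1$ automatically. In both cases $x,y\in F_m$ with $m=n(x)=n(y)$, so $d_Y(f(x),f(y))\le\on{diam}f(F_m)\le\varepsilon/2<\varepsilon$, which is (2).

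The routine parts are the verification that $\{E_{n,i}\}$ covers $X$ and the bookkeeping with the numerical constants. I expect the main obstacle to be the $(2)\Rightarrow(1)$ direction: since $f$ need not be continuous, one cannot control $f$ on $\overline{B_n\cap S(z_i,1/(3n))}$ simply by passing to the closure, and the gauge condition itself must be invoked at the added limit points — which is legitimate precisely because at each such point $\delta$ takes a fixed positive value, even though the point may fail to lie in $B_n$. By contrast, the $(1)\Rightarrow(2)$ direction needs only one idea, namely to exploit the enumeration of the closed cover and let $\delta(x)$ be the distance from $x$ to the union of the pieces preceding $F_{n(x)}$; this guarantees that two points which are close in the sense of the gauge must already share a piece of the cover on which $f$ has small oscillation.
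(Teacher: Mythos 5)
Your proof is correct and follows essentially the same route as the paper, which obtains Theorem~\ref{FaC} from the Lebesgue characterization (Theorem~\ref{HL1}) via the two implications of Proposition~\ref{Novi}; your two arguments are exactly written-out proofs of those implications, matching the argument of \cite{FC} that the paper cites rather than reproduces. The only cosmetic point is that in $(1)\Rightarrow(2)$ the set $F_1\cup\cdots\cup F_{n(x)-1}$ could be empty, so one should cap $\delta(x)$ at $1$ (or discard the empty $F_n$'s) to ensure $\delta(x)$ is a finite positive real.
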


If a function $f$ between metric spaces $X$ and $Y$ satisfies condition (2) from Theorem \ref{FaC},
we will say that $f$ has {\em LTZ-property}. What is interesting, the proof of Theorem ~5 in \cite{FC} uses the Lebesgue property and a careful analysis of this proof shows that the following implications are valid.

\begin{proposition} \label{Novi}
Let $(X,d_X)$, $(Y,d_Y)$ be  metric spaces and $f\colon X\to Y$.
\begin{itemize}
\item[(i)] If $X$ is separable and $f$ has LTZ-property, then $f$ has the Lebesgue property.
\item[(ii)] If $f$ has the Lebesgue property, then $f$ has LTZ-property.
\end{itemize}
\end{proposition}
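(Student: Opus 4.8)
The plan is to prove the two implications separately; in both I fix $\varepsilon>0$ and, since the Lebesgue property is stated with ``$\le$'' while the LTZ-property is stated with ``$<$'', I apply the available property with a parameter slightly smaller than $\varepsilon$. For (ii), assuming $f$ has the Lebesgue property I apply it with $\varepsilon/2$ to get a cover $(X_i)_{i\in\mathbb N}$ of $X$ by closed sets with $\on{diam}f(X_i)\le\varepsilon/2$. For $x\in X$ put $n(x)=\min\{i\in\mathbb N:x\in X_i\}$ and $F(x)=\bigcup_{i<n(x)}X_i$; then $F(x)$ is closed, being a finite union of closed sets, and $x\notin F(x)$ by minimality, so with the convention $\on{dist}(x,\emptyset)=+\infty$ the number $\delta(x)=\min\{1,\on{dist}(x,F(x))\}$ is positive. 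The verification of the LTZ-condition for this $\delta$ is then routine: if $d_X(x,y)<\min\{\delta(x),\delta(y)\}$ and, say, $n(x)\le n(y)$, then necessarily $n(x)=n(y)$ --- otherwise $x\in X_{n(x)}\subset F(y)$ would give $\on{dist}(y,F(y))\le d_X(x,y)<\delta(y)\le\on{dist}(y,F(y))$ --- hence $x,y\in X_{n(x)}$ and $d_Y(f(x),f(y))\le\on{diam}f(X_{n(x)})\le\varepsilon/2<\varepsilon$.

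For (i), assuming $X$ separable and $f$ with the LTZ-property, I fix $\varepsilon>0$, take $\delta\colon X\to\mathbb R^+$ witnessing the LTZ-property for the value $\varepsilon/3$, and set $E_n=\{x\in X:\delta(x)>1/n\}$ for $n\in\mathbb N$, so $X=\bigcup_n E_n$. Using separability I fix a countable dense set $\{q_m:m\in\mathbb N\}$ in $X$; then for each fixed $n$ the closed balls $B(q_m,1/(3n))$, $m\in\mathbb N$, cover $X$, and I propose the countable closed family
\[
X_{n,m}=\overline{E_n}\cap B\bigl(q_m,\tfrac1{3n}\bigr)\qquad(n,m\in\mathbb N),
\]
which covers $X$: given $x$, choose $n$ with $x\in E_n$ and $m$ with $d_X(x,q_m)<1/(3n)$.

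The step I expect to be the main obstacle is the estimate $\on{diam}f(X_{n,m})\le\varepsilon$, because a point of $\overline{E_n}$ need not lie in $E_n$, so the LTZ-condition cannot be applied between two such points directly; one must pass to the closure of the level set $E_n$ rather than of the larger ``good'' set $E_n\cap B(q_m,\cdot)$, and then transfer the diameter bound. Concretely, for $x\in\overline{E_n}$, since $\min\{\delta(x),1/(6n)\}>0$ one may pick $x'\in E_n$ with $d_X(x,x')<\min\{\delta(x),1/(6n)\}$; then $d_X(x,x')<\delta(x)$ and $d_X(x,x')<1/n<\delta(x')$, so $d_Y(f(x),f(x'))<\varepsilon/3$. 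Given $x,y\in X_{n,m}$ with such approximants $x',y'\in E_n$, one has $d_X(x,y)\le 2/(3n)$ and $d_X(x,x'),d_X(y,y')<1/(6n)$, hence $d_X(x',y')<1/n<\min\{\delta(x'),\delta(y')\}$ and $d_Y(f(x'),f(y'))<\varepsilon/3$; the triangle inequality then yields $d_Y(f(x),f(y))<\varepsilon$, so $\on{diam}f(X_{n,m})\le\varepsilon$, and $(X_{n,m})_{n,m}$ witnesses the Lebesgue property of $f$. Separability of $X$ enters only to keep the ball cover, hence the resulting closed cover, countable; no separability of $Y$ (nor anything in (ii)) is used.
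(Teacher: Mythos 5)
Your proof is correct, and it follows essentially the route the paper has in mind: the paper gives no proof of Proposition \ref{Novi}, instead asserting that it falls out of a careful analysis of the proof of Theorem~5 in \cite{FC}, and your two arguments (defining $\delta(x)$ via the distance to the union of the earlier closed pieces for (ii), and covering $X$ by the closed sets $\overline{E_n}\cap B(q_m,1/(3n))$ with an approximation-into-$E_n$ step for (i)) are exactly the standard Fenecios--Cabral/Lee--Tang--Zhao argument being referred to. All the estimates check out, including the delicate point you flag about points of $\overline{E_n}\setminus E_n$.
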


\begin{remark}
Using Theorem \ref{HL1} and Proposition \ref{Novi}, we obtain Theorem \ref{FaC} where
the separability of $Y$ can be omitted. Also, the separability of $X$ is needed only for
implication \rm{(2) $\Rightarrow$ (1)} in Theorem \ref{FaC}.
\end{remark}

We observe that the separability of $X$ is necessary for implication in Proposition \ref{Novi} (i).

\begin{proposition} \label{Hol11}
Let $(X,d_X)$ be a metric space. The following are equivalent:
\begin{itemize}
\item[(1)] $(X,d_X)$ is separable;
\item[(2)] For each metric space $(Y,d_Y)$, every function $f\colon X \to Y$ with LTZ-property has the Lebesgue property.
\end{itemize}
\end{proposition}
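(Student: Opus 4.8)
The plan is to notice that implication (1)~$\Rightarrow$~(2) is nothing but Proposition~\ref{Novi}(i), so the whole content lies in (2)~$\Rightarrow$~(1), which I would prove by contraposition. Assume $(X,d_X)$ is not separable; the goal is then to exhibit a metric space $(Y,d_Y)$ together with a map $f\colon X\to Y$ that has the LTZ-property but not the Lebesgue property. Guided by the proof of Proposition~\ref{HLo}, the obvious candidate is $Y=X$, $d_Y=d_X$, and $f=\on{id}_X$.

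The first step, entirely routine, is to record that $\on{id}_X$ has the LTZ-property over any metric space $X$: given $\varepsilon>0$, take the constant function $\delta(x)=\varepsilon$; then $d_X(x,y)<\min\{\delta(x),\delta(y)\}=\varepsilon$ forces $d_Y(f(x),f(y))=d_X(x,y)<\varepsilon$.

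The second step is to show that $\on{id}_X$ fails the Lebesgue property when $X$ is not separable. As in Proposition~\ref{HLo}, non-separability yields some $\varepsilon>0$ and an uncountable $\varepsilon$-discrete set $D\subset X$. I would then fix the single radius $\varepsilon/2$ and argue that no countable cover $(X_n)_{n\in\mathbb N}$ of $X$ by closed sets can satisfy $\on{diam}f(X_n)=\on{diam}X_n\le\varepsilon/2$ for all $n$: any $X_n$ of diameter $\le\varepsilon/2$ contains at most one point of $D$, since two distinct points of $D$ are at distance $\ge\varepsilon>\varepsilon/2$, so $D=\bigcup_n(X_n\cap D)$ would be countable, a contradiction. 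Hence condition~(2) of Theorem~\ref{HL1} fails for $f$, so condition~(2) of the proposition fails, which completes the contrapositive.

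I do not anticipate a genuine obstacle here; the only points needing a little care are the logical form of ``$f$ fails the Lebesgue property'' (it suffices that the closed-cover condition break down for one chosen radius, here $\varepsilon/2$) and the observation that the single map $\on{id}_X$ serves as a counterexample both here and in Proposition~\ref{HLo}, with the LTZ-property now playing the role that Baire~1 played there.
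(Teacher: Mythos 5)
Your proposal is correct and follows essentially the same route as the paper: reduce to (2) $\Rightarrow$ (1), take $Y=X$ and $f=\on{id}_X$, note that the identity trivially has the LTZ-property, and use an uncountable $\varepsilon$-discrete set produced by non-separability to defeat any countable closed cover with small diameters (the paper uses radius $\varepsilon/4$ and leaves the counting argument implicit, while you use $\varepsilon/2$ and spell out that each small set meets the discrete set in at most one point). No gaps.
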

\begin{proof} We only need to show (2) $\Rightarrow$ (1).  We mimic the proof of Proposition \ref{HLo}. Suppose that $(X,d_X)$ is not separable. Then there is $\varepsilon > 0$ and an uncountable
$\varepsilon$-discrete set $L \subset X$. Put $L = \{x_i: i \in I\}$.
Then $\{B(x_i,\varepsilon/4): i \in I\}$ is an uncountable family of pairwise disjoint closed balls.

Put $Y = X$, $d_Y = d_X$ and let $f\colon X \to Y$ be the identity function which surely has LTZ-property.
Thus by (2) it has the Lebesgue property. However, for $\varepsilon/4$  there is no cover $(X_i)_{i \in \mathbb N}$  of $X$ consisting of closed sets such that $\on{diam}f(X_i) \le \varepsilon/4$ for all $i \in \mathbb N$, a contradiction.
\end{proof}

\begin{remark} \label{addi}
Let $(X,d_X)$, $(Y,d_Y)$ be  metric spaces and $(X, d_X)$ is hereditary Baire. By \cite[Thm C]{KM},
a function $f\colon X\to Y$  is Baire~1 if and only if it has LTZ-property.
\end{remark}

\section{Equi-Baire 1 and equi-Lebesgue families}

The characterization of Baire~1 functions by LTZ-property motivated Lecomte in his paper \cite{Le} to introduce the notion of equi-Baire~1 family of functions. Then it was rediscovered by Alighani-Koopaei \cite{Al} who showed its applications in dynamical systems; see also his recent paper \cite{A2}.

\begin{df}
Let $(X,d_X)$ and $(Y,d_Y)$  be metric spaces. We say that a family $\mathcal F$ of functions from $X$ to $Y$ is equi-Baire~1 if, for each $\varepsilon > 0$, there exists a function $\delta\colon X \to \mathbb R^+$ such that, for all $f \in \mathcal F$  and $x, y \in X$ the condition $d_X(x,y) < \min\{\delta(x),\delta(y)\}$ implies
$d_Y(f(x),f(y)) \le \varepsilon$.
\end{df}

\begin{remark} \label{HLL}
Let $(X,d_X)$ and $(Y,d_Y)$ be  metric spaces.
\begin{itemize}
\item[(i)] By Proposition  \ref{Novi}(i), if $(X,d_X)$ is separable, then every member of an
equi-Baire~1 family of functions from $X$ to $Y$ is a Baire~1 function.
\item[(ii)] By Remark \ref{addi}, if $(X,d_X)$ is hereditarily Baire, then every member of an
equi-Baire~1 family of functions from $X$ to $Y$ is a Baire~1 function.
\end{itemize}
\end{remark}

We suggest the following definition of an equi-Lebesgue family.

\begin{df}
Let $(X,d_X)$ and $(Y,d_Y)$  be metric spaces. We say that a family $\mathcal F$ of functions from $X$ to $Y$ is equi-Lebesgue if,
for every $\varepsilon > 0$, there is a cover $(X_i)_{i \in \mathbb N}$  of $X$ consisting of closed sets such that $\on{diam}f(X_i) \le \varepsilon$ for all $i \in \mathbb N$ and $f \in \mathcal F$.
\end{df}

An advantage of the notion of an equi-Lebesgue family is that every member of an equi-Lebesgue family of functions from $X$ to $Y$ is a Baire~1 function (see Theorem \ref{HL1}).

From the proof of Theorem 3.6 in \cite{Al} we can deduce a general fact where the assumptions on spaces $X$ and $Y$ are weakened.

\begin{fact} \cite[Theorem 3.6]{Al}\label{Ali}
Let $(X,d_X)$ and $(Y,d_Y)$  be metric spaces and $\mathcal F$ be a family of functions from $X$ to $Y$.
\begin{itemize}
\item [(i)] If $\mathcal F$ is equi-Lebesgue, then it is equi-Baire~1.
\item [(ii)] Assume that $(X,d_X)$ is separable. If $\mathcal F$ is equi-Baire~1, then $\mathcal F$
is equi-Lebesgue.
\end{itemize}
\end{fact}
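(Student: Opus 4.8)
The plan is to prove the two implications of Fact~\ref{Ali} directly from the definitions, leaning on Theorem~\ref{HL1} and Proposition~\ref{Novi} only where the separability hypothesis is actually used.

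For part (i), suppose $\mathcal F$ is equi-Lebesgue. Fix $\varepsilon>0$ and take the closed cover $(X_i)_{i\in\mathbb N}$ with $\on{diam}f(X_i)\le\varepsilon$ for all $i$ and all $f\in\mathcal F$. I want to build a single gauge $\delta\colon X\to\mathbb R^+$ witnessing the equi-Baire~1 property. The natural idea is the standard ``shrinking'' trick used to pass from a closed cover to a pointwise gauge: for $x\in X$, let $i(x)$ be the least index with $x\in X_{i(x)}$, and set $\delta(x)=\frac13\on{dist}\bigl(x,\bigcup_{j<i(x)}X_j\bigr)$ (with the convention that the distance to the empty set is, say, $1$), so $\delta(x)>0$ because each $X_j$ is closed and $x\notin X_j$ for $j<i(x)$. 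Then if $d_X(x,y)<\min\{\delta(x),\delta(y)\}$, the smaller of the two indices, say $i(x)\le i(y)$, forces $y\in X_{i(x)}$: indeed $y\notin X_j$ for every $j<i(x)$ since $d_X(x,y)<\delta(x)\le\on{dist}(x,X_j)$, and $y$ lies in some $X_k$ with $k\ge i(x)$; but $k=i(y)\ge i(x)$ and if $i(y)>i(x)$ then $x\in X_{i(x)}$ with $i(x)<i(y)$ contradicts $d_X(x,y)<\delta(y)\le\on{dist}(y,X_{i(x)})$. Hence $x,y\in X_{i(x)}$, so $d_Y(f(x),f(y))\le\on{diam}f(X_{i(x)})\le\varepsilon$ for all $f\in\mathcal F$, as required. (One must double-check the index bookkeeping — this is the only place where a little care is needed — but no separability is used.)

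For part (ii), assume $(X,d_X)$ is separable and $\mathcal F$ is equi-Baire~1. Fix $\varepsilon>0$ and let $\delta\colon X\to\mathbb R^+$ be the gauge for $\varepsilon$ from the definition. The key observation, exactly as in the proof behind Proposition~\ref{Novi}(i) (the Fenecios--Cabral argument), is that one can extract from $\delta$ a countable closed cover that works simultaneously for every $f\in\mathcal F$, precisely because the gauge $\delta$ does not depend on $f$. Concretely: for $n\in\mathbb N$ let $A_n=\{x\in X:\delta(x)>1/n\}$; using separability, cover $A_n$ by countably many open balls of radius $\tfrac1{4n}$ centered at points of $A_n$. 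If $B=S(x_0,\tfrac1{4n})$ is such a ball with $x_0\in A_n$ and $x,y\in B\cap A_n$, then $d_X(x,y)<\tfrac1{2n}<\min\{\delta(x),\delta(y)\}$, so $d_Y(f(x),f(y))\le\varepsilon$ for every $f\in\mathcal F$; thus $\on{diam}f(B\cap A_n)\le\varepsilon$, and the same holds for the closure $\overline{B\cap A_n}$ after slightly shrinking the radius (or by replacing $\varepsilon$ by $2\varepsilon$, harmless since $\varepsilon$ was arbitrary), giving a closed set of small $f$-diameter. As $n$ ranges over $\mathbb N$ the sets $A_n$ exhaust $X$, so collecting all these (closures of) small balls over all $n$ yields a countable closed cover of $X$ with $f$-diameter $\le\varepsilon$ (or $\le 2\varepsilon$) for every $f\in\mathcal F$; this is the equi-Lebesgue property. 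Alternatively, one may simply invoke Theorem~\ref{HL1} together with Proposition~\ref{Novi}(i) applied with a uniform gauge, but spelling out the construction keeps the argument self-contained.

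The main obstacle is entirely in part (i): getting the index-comparison argument for the gauge $\delta$ right, i.e. verifying that $d_X(x,y)<\min\{\delta(x),\delta(y)\}$ genuinely forces $x$ and $y$ into a common $X_i$. Part (ii) is routine once one notices that $f$-independence of the gauge is all that was needed in the classical single-function proof, so the covering construction goes through verbatim with ``for all $f\in\mathcal F$'' appended.
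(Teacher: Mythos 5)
The paper itself gives no proof of Fact~\ref{Ali}: it cites \cite[Theorem 3.6]{Al} and merely remarks that the weakened hypotheses can be read off from the arguments there and in \cite{FC}, so your self-contained write-up is supplying what the paper leaves implicit, and it follows the standard Lecomte/Alikhani-Koopaei/Fenecios--Cabral route. Part (i) is correct and complete: $\bigcup_{j<i(x)}X_j$ is a finite union of closed sets missing $x$, so $\delta(x)>0$, and your index comparison does force $i(x)=i(y)$, hence $x,y\in X_{i(x)}$ and $d_Y(f(x),f(y))\le\varepsilon$ uniformly in $f$; no separability is used, as required. In part (ii) the construction is the right one, but the single step you wave at --- passing from $B\cap A_n$ to its closure --- is exactly the point that needs an argument, because $f$ is not continuous and $\on{diam}f(\overline{S})$ is not in general controlled by $\on{diam}f(S)$. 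The correct justification is: if $z\in\overline{B\cap A_n}$, then $\delta(z)>0$, so any $x\in B\cap A_n$ with $d_X(z,x)<\min\{\delta(z),1/n\}$ satisfies $d_X(z,x)<\min\{\delta(z),\delta(x)\}$, whence $d_Y(f(z),f(x))\le\varepsilon$ for every $f\in\mathcal F$; combining two such approximations with the interior bound gives $\on{diam}f(\overline{B\cap A_n})\le 3\varepsilon$ (not $2\varepsilon$), which is still harmless since $\varepsilon$ was arbitrary. Note that merely ``shrinking the radius'' does not repair this on its own, since the offending points are limit points of $A_n$ whose gauge values may be tiny; the positivity of $\delta$ at those points is what saves the day. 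With that sentence added the proof is complete, and your closing observation --- that the $f$-independence of the gauge is all that the single-function proof behind Proposition~\ref{Novi}(i) ever used --- is precisely the paper's stated reason for asserting Fact~\ref{Ali} in this generality.
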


The separability of $X$ in Fact \ref{Ali} (ii) is necessary. Indeed, we use the idea from the proof of
Proposition~\ref{Hol11}. Assume that $(X,d_X)$ is not separable and take the identity function $f\colon X\to X$.
We know that $f$  is a function with the  LTZ-property and  without the Lebesgue property. Thus $\mathcal F=\{ f\}$ is equi-Baire~1 and it is not equi-Lebesgue.

\section{Properties concerning pointwise convergence}
Let $X$ be a topological space, $(Y,d)$ be a metric space and $Y^X$ be the space of all functions from $X$ to $Y$.
We use some notions of general topology, for instance, uniformities and nets. For more information on them, we refer the reader to \cite{Ke}.

Recall that the topology $\tau_p$ of pointwise
convergence on $Y^X$ is induced by the uniformity $\mathfrak U_p$ of
pointwise convergence which has a base consisting of sets of the
form
$$W(A,\varepsilon )=\{(f,g):\ \forall\ x\in A\ \ d(f(x),g(x))<
\varepsilon \},$$
where $A$ is a finite subset of $X$ and $\varepsilon >0$. The general
$\tau_p$-basic neighborhood of $f\in Y^X$ will be denoted by
$W(f,A,\varepsilon )$, i.e. $W(f,A,\varepsilon
)=W(A,\varepsilon )[f]$. In other words,
$$W(f,A,\varepsilon )=\{g \in Y^X:\ d(f(x),g(x)) <\varepsilon\ \text{for every}\ x \in A\}.$$

The following result is classical.

\begin{theorem} \cite[Chapter 7, Theorem 14]{Ke}
Let $X$ be a topological space and $(Y,d)$ be a metric space. If a family $\mathcal F$ of functions from $X$ to $(Y,d)$ is equicontinuous at $x \in X$, then the closure of $\mathcal F$ relative to the topology of pointwise convergence is also equicontinuous at $x$.
\end{theorem}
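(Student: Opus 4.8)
The plan is to carry out the classical $\varepsilon/3$ argument. Write $\overline{\mathcal F}$ for the closure of $\mathcal F$ in $Y^X$ with respect to $\tau_p$. Fix $\varepsilon>0$. The first step is to use equicontinuity of $\mathcal F$ at $x$: there is an open neighborhood $U$ of $x$ such that $d(f(x),f(z))<\varepsilon/3$ for all $z\in U$ and all $f\in\mathcal F$. I claim this same $U$ witnesses equicontinuity of $\overline{\mathcal F}$ at $x$, so that in fact no modulus of equicontinuity is lost in passing to the pointwise closure.

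To verify the claim, fix an arbitrary $g\in\overline{\mathcal F}$ and an arbitrary $z\in U$; the goal is $d(g(x),g(z))<\varepsilon$. The second step is to approximate $g$ by a member of $\mathcal F$ on the two relevant points: consider the basic $\tau_p$-neighborhood $W(g,\{x,z\},\varepsilon/3)$ of $g$. Since $g$ lies in the $\tau_p$-closure of $\mathcal F$, this neighborhood meets $\mathcal F$, so we may pick $f\in\mathcal F$ with $d(f(x),g(x))<\varepsilon/3$ and $d(f(z),g(z))<\varepsilon/3$.

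The final step is the triangle inequality: $d(g(x),g(z))\le d(g(x),f(x))+d(f(x),f(z))+d(f(z),g(z))<\varepsilon/3+\varepsilon/3+\varepsilon/3=\varepsilon$, where the middle term is estimated using $z\in U$ and $f\in\mathcal F$. Since $g\in\overline{\mathcal F}$ and $z\in U$ were arbitrary, $\overline{\mathcal F}$ is equicontinuous at $x$. There is essentially no obstacle here; the only point to be careful about is the order of quantifiers — $U$ must be chosen depending only on $\varepsilon$ (and on $\mathcal F$ and $x$), before $g$ and $z$ are specified, whereas the auxiliary function $f$ is permitted to depend on $g$ and $z$, which is harmless since only its existence is used.
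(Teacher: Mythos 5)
Your proof is correct and is precisely the classical $\varepsilon/3$ argument; the paper itself gives no proof but simply cites Kelley, whose proof is the same one you wrote. The quantifier discipline you note (choosing $U$ from $\varepsilon$ alone, before $g$ and $z$, while letting the approximating $f\in\mathcal F$ depend on $g$ and $z$) is exactly the point that makes the argument work, so nothing is missing.
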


Thus if a net $(f_\sigma)$  of continuous functions from a topological space $X$ to a metric space $(Y,d)$ pointwise converges to a function $f\colon X \to Y$ and the net $(f_\sigma)$ is equicontinuous, then $f$ is continuous.  We will find an analogy for Baire~1 functions.

The following proposition is easy to verify.

\begin{proposition} \label{lu}
Let $(X,d_X)$ and $(Y,d_Y)$ be metric spaces. Let $\mathcal F$ be an equi-Lebesgue family of functions from $X$ to $Y$.
The closure of $\mathcal F$ relative to the topology of pointwise convergence is equi-Lebesgue too.
\end{proposition}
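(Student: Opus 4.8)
The plan is to fix $\varepsilon>0$ and exhibit a single closed cover of $X$ that witnesses the equi-Lebesgue property simultaneously for every $g$ in the pointwise closure $\overline{\mathcal F}^{\,\tau_p}$. Since $\mathcal F$ is equi-Lebesgue, there is a cover $(X_i)_{i\in\mathbb N}$ of $X$ by closed sets with $\on{diam} f(X_i)\le\varepsilon$ for all $i\in\mathbb N$ and all $f\in\mathcal F$. I claim the \emph{same} cover works for $\overline{\mathcal F}^{\,\tau_p}$, possibly after inflating $\varepsilon$ slightly — i.e. I will show $\on{diam} g(X_i)\le\varepsilon$ (or $\le\varepsilon+\eta$ for arbitrary $\eta>0$, which suffices since one may start from $\varepsilon/2$) for every $g\in\overline{\mathcal F}^{\,\tau_p}$.

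The key step is a two-point argument. Fix $i$ and two points $x,y\in X_i$, and let $g\in\overline{\mathcal F}^{\,\tau_p}$. Using the basic neighborhood $W(g,\{x,y\},\eta)$ of $g$ in the topology of pointwise convergence, pick $f\in\mathcal F$ with $d_Y(f(x),g(x))<\eta$ and $d_Y(f(y),g(y))<\eta$. Then by the triangle inequality
$$d_Y(g(x),g(y))\le d_Y(g(x),f(x))+d_Y(f(x),f(y))+d_Y(f(y),g(y))<\eta+\varepsilon+\eta=\varepsilon+2\eta.$$
Since $x,y\in X_i$ were arbitrary, $\on{diam} g(X_i)\le\varepsilon+2\eta$, and since $\eta>0$ was arbitrary, $\on{diam} g(X_i)\le\varepsilon$. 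As the $X_i$ are already closed and still cover $X$, the family $\overline{\mathcal F}^{\,\tau_p}$ is equi-Lebesgue.

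There is essentially no obstacle here: the only thing to be careful about is that the cover $(X_i)$ must be chosen \emph{before} passing to the closure and must not depend on the particular function, which is exactly what the equi-Lebesgue hypothesis delivers; the pointwise-closure condition is then used only at the finite set $\{x,y\}$, which is all that a $\tau_p$-neighborhood controls. To keep the bounds clean one may instead invoke the equi-Lebesgue property for $\varepsilon/2$ at the outset, obtain $\on{diam} f(X_i)\le\varepsilon/2$, and then the two-point estimate with $\eta=\varepsilon/4$ gives $\on{diam} g(X_i)\le\varepsilon$ directly, avoiding any limiting argument.
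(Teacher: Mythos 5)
Your proof is correct and is exactly the standard argument the paper has in mind (the paper omits the proof entirely, remarking only that the proposition ``is easy to verify''): fix the closed cover witnessing equi-Lebesgue for $\mathcal F$, then use a two-point $\tau_p$-neighborhood $W(g,\{x,y\},\eta)$ and the triangle inequality to transfer the diameter bound to any $g$ in the pointwise closure. The limiting step in $\eta$ (or the $\varepsilon/2$ variant) is handled correctly, so nothing is missing.
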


We have the following corollaries.

\begin{corollary} \label{lh1}
Let $(X, d_X)$, $(Y, d_Y)$ be metric spaces and   $(f_\sigma)$ be a net of Baire~1 functions from $X$ to $Y$ which pointwise converges to a function $f\colon X \to Y$. If  the net $(f_\sigma)$ is equi-Lebesgue,  then $f$ is Baire~1.
\end{corollary}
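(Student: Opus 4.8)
The plan is to reduce Corollary~\ref{lh1} to Proposition~\ref{lu} together with the implication (2)~$\Rightarrow$~(1) from Theorem~\ref{HL1}. First I would observe that since every $f_\sigma$ is Baire~1 and forms (together, i.e. as the single net) an equi-Lebesgue family, Proposition~\ref{lu} applies: the $\tau_p$-closure $\overline{\{f_\sigma\}}$ of the family $\{f_\sigma : \sigma\}$ is again equi-Lebesgue.

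Next I would note that the pointwise limit $f$ lies in this $\tau_p$-closure. This is the one slightly delicate point: in general a net need not have its limit in the closure of its range, but pointwise convergence is exactly convergence in the topology $\tau_p$, and in any topological space a limit of a net lies in the closure of the set of terms of that net. Hence $f \in \overline{\{f_\sigma : \sigma\}}$, which by the previous step is equi-Lebesgue; in particular the singleton $\{f\}$ is equi-Lebesgue, i.e. $f$ has the Lebesgue property.

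Finally, by the implication (2)~$\Rightarrow$~(1) in Theorem~\ref{HL1} (which requires no separability hypotheses on either space), a function with the Lebesgue property is Baire~1. Therefore $f$ is Baire~1, completing the argument. I expect no serious obstacle here; the only thing to state carefully is the membership $f \in \overline{\{f_\sigma\}}$ in $\tau_p$, and the fact that equi-Lebesgue of a family descends to equi-Lebesgue of any subfamily (in particular to a singleton), which is immediate from the definition since the same cover $(X_i)_i$ works.
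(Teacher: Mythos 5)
Your proposal is correct and is exactly the argument the paper intends: its proof of this corollary is the one-line ``Use Proposition~\ref{lu} and Theorem~\ref{HL1}'', and you have simply filled in the details (the limit $f$ lies in the $\tau_p$-closure of $\{f_\sigma\}$, which is equi-Lebesgue by Proposition~\ref{lu}, so $f$ has the Lebesgue property and is Baire~1 by the implication (2)~$\Rightarrow$~(1) of Theorem~\ref{HL1}, which needs no separability). No gaps.
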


\begin{proof}
Use Proposition \ref{lu} and Theorem \ref{HL1}.
\end{proof}

\begin{corollary} \label{lhh}
Let  $(X, d_X)$, $(Y, d_Y)$ be metric spaces and $X$ be separable. Let $(f_\sigma)$ be a net of Baire~1 functions from $X$ to $Y$ which pointwise converges to a function $f\colon X \to Y$. If  the net $(f_\sigma)$ is equi-Baire~1,  then $f$ is Baire~1.
\end{corollary}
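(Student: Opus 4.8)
The plan is to reduce Corollary~\ref{lhh} directly to Corollary~\ref{lh1} by invoking the equivalence between equi-Baire~1 and equi-Lebesgue families that holds under separability of the domain. Concretely, since $(X,d_X)$ is separable and the net $(f_\sigma)$ forms an equi-Baire~1 family, Fact~\ref{Ali}(ii) tells us that this family is in fact equi-Lebesgue. The hypotheses of Corollary~\ref{lh1} are now met: $(f_\sigma)$ is a net of Baire~1 functions from $X$ to $Y$ that pointwise converges to $f$, and it is equi-Lebesgue. Hence $f$ is Baire~1 by Corollary~\ref{lh1}.

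The only point that requires a word of care is whether the members of the net are genuinely Baire~1, but this is already granted in the statement (the net consists of Baire~1 functions); alternatively, by Remark~\ref{HLL}(i) separability of $X$ forces every member of an equi-Baire~1 family to be Baire~1, so the hypothesis is in any case automatic. Thus the proof is essentially a two-line chain: separability upgrades equi-Baire~1 to equi-Lebesgue (Fact~\ref{Ali}(ii)), and then Corollary~\ref{lh1} closes the net under pointwise limits within the class of Baire~1 functions.

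There is no real obstacle here: the entire content has been front-loaded into Fact~\ref{Ali}(ii) (whose proof adapts the argument of \cite[Theorem~3.6]{Al}) and into Proposition~\ref{lu} together with Theorem~\ref{HL1}, both already established. If one wanted to avoid citing Corollary~\ref{lh1} and argue from scratch, the route would be identical in spirit: apply Fact~\ref{Ali}(ii) to get an equi-Lebesgue family, pass to its pointwise closure via Proposition~\ref{lu} (which contains $f$), and finally apply the implication (2)~$\Rightarrow$~(1) of Theorem~\ref{HL1} to the function $f$, which now has the Lebesgue property and is therefore Baire~1.

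\begin{proof}
Since $(X,d_X)$ is separable and the family $\{f_\sigma\}$ is equi-Baire~1, it is equi-Lebesgue by Fact~\ref{Ali}(ii). Now apply Corollary~\ref{lh1}.
\end{proof}
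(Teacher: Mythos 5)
Your proof is correct and coincides with the paper's own argument: the paper also proves Corollary~\ref{lhh} by applying Fact~\ref{Ali}(ii) to convert the equi-Baire~1 hypothesis into equi-Lebesgue via separability of $X$, and then invoking Corollary~\ref{lh1}.
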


\begin{proof}
Use Fact \ref{Ali} (ii) and Corollary \ref{lh1}.
\end{proof}

As Example~4.1  in \cite{BKS} shows, the condition of equi-Lebesgue (equi-Baire ~1) is not necessary to guarantee that the pointwise limit of a sequence of Baire~1 functions is Baire~1. This example
uses $X=Y=\mathbb R$ with the Euclidean metric. Let $\{q_n: n \in \mathbb N\}$ be an enumeration of rational numbers. For every $n \in \mathbb N$ put $f_n = \chi_{\{q_n\}}$. The sequence $(f_n)$ is pointwise convergent to the zero function. However, $\{f_n: n \in \mathbb N\}$ is not equi-Baire~1,
and it is not equi-Lebesgue by Fact~\ref{Ali} (i).

This example can be generalized as follows.

\begin{proposition} \label{HL7} Let $(X,d)$ be a metric space. Consider a non-empty family $\mathcal A$ of pairwise disjoint subsets of $(X,d)$. Then the family $\{\chi_A: A \in \mathcal A\}$ is equi-Lebesgue  if and only if $\mathcal A$ is countable, each element of $\mathcal A$ is $F_\sigma$ and $\bigcup \mathcal A$ is $G_\delta$.
\end{proposition}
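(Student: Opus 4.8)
The plan is to handle both directions by working directly with closed covers, using only that $\mathcal A$ is pairwise disjoint together with the elementary remark that $\on{diam}\chi_A(C)<1$ forces $C\subseteq A$ or $C\cap A=\emptyset$ (since $\chi_A(C)\subseteq\{0,1\}$).

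For the implication from right to left, suppose $\mathcal A$ is countable, every $A\in\mathcal A$ is $F_\sigma$, and $\bigcup\mathcal A$ is $G_\delta$. I would write each $A\in\mathcal A$ as a countable union $\bigcup_j F_{A,j}$ of closed sets, and $X\setminus\bigcup\mathcal A$ as a countable union $\bigcup_k G_k$ of closed sets (possible since $\bigcup\mathcal A$ is $G_\delta$). The collection $\{F_{A,j}:A\in\mathcal A,\ j\in\mathbb N\}\cup\{G_k:k\in\mathbb N\}$ is then a countable closed cover of $X$; enumerate it as $(X_i)_{i\in\mathbb N}$. The key point, checked in one line, is that each $X_i$ is either contained in or disjoint from every $B\in\mathcal A$: a set $G_k$ is disjoint from all of $\bigcup\mathcal A$, while a set $F_{A,j}$ is contained in $A$ and, by pairwise disjointness, disjoint from every other member of $\mathcal A$. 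Hence $\on{diam}\chi_B(X_i)=0$ for all $i$ and all $B\in\mathcal A$, so this single cover witnesses the equi-Lebesgue property for every $\varepsilon>0$.

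For the converse, apply the definition of equi-Lebesgue with, say, $\varepsilon=1/2$ to get a closed cover $(X_i)_{i\in\mathbb N}$ of $X$ with $\on{diam}\chi_A(X_i)\le 1/2$ for all $i$ and all $A\in\mathcal A$; after discarding the empty members we may assume each $X_i\neq\emptyset$, and then, by the remark above, each $X_i$ is either contained in a (unique, by disjointness) member of $\mathcal A$ or disjoint from $\bigcup\mathcal A$. From this one cover I would extract all three conclusions. For countability, pick for each nonempty $A\in\mathcal A$ an index $i(A)$ with $X_{i(A)}\subseteq A$; since $X_{i(A)}\neq\emptyset$ and the members of $\mathcal A$ are pairwise disjoint, $A\mapsto i(A)$ is injective, so the set of nonempty members of $\mathcal A$ is countable, whence $\mathcal A$ is countable. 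For the $F_\sigma$ property, $A=\bigcup\{X_i:X_i\subseteq A\}$ for each $A\in\mathcal A$, a countable union of closed sets. For the $G_\delta$ property of $\bigcup\mathcal A$, I would show $X\setminus\bigcup\mathcal A=\bigcup\{X_i:X_i\cap\bigcup\mathcal A=\emptyset\}$, which is $F_\sigma$: the inclusion $\supseteq$ is clear, and for $\subseteq$, any $x\notin\bigcup\mathcal A$ lies in some $X_i$ by the cover property, and that $X_i$ cannot be contained in any member of $\mathcal A$, so it must be disjoint from $\bigcup\mathcal A$.

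I do not expect a real obstacle here; the only step needing a little care is the last one, where the cover property of $(X_i)$ is precisely what makes the claimed $F_\sigma$ description of $X\setminus\bigcup\mathcal A$ exhaustive, together with the harmless bookkeeping around empty members of $\mathcal A$ or of the cover.
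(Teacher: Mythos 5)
Your proof is correct and follows essentially the same route as the paper: a $\frac12$-cover forces each cover element to be contained in or disjoint from every $A\in\mathcal A$, which yields countability, the representation $A=\bigcup\{X_i: X_i\subseteq A\}$, and the $F_\sigma$ description of $X\setminus\bigcup\mathcal A$. The only cosmetic differences are that you spell out the easy direction (which the paper leaves as "easy to verify") and you derive the $F_\sigma$-ness of each $A$ directly from the cover rather than from the Baire~1 property of $\chi_A$.
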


\begin{proof}
Suppose that $\mathcal A$ is a family of pairwise disjoint subsets of $(X,d)$ such that  $\mathcal A$ is countable, each element of $\mathcal A$ is $F_\sigma$ and $\bigcup \mathcal A$ is $G_\delta$. Then it is easy to verify that the family $\{\chi_A: A \in \mathcal A\}$ is equi-Lebesgue.

Suppose now that $\mathcal A$ is a family of pairwise disjoint subsets of $(X,d)$ such that the family $\{\chi_A: A \in \mathcal A\}$ is equi-Lebesgue. Since for every $A \in \mathcal A$ the function $\chi_A$ must be Baire~1, every $A \in \mathcal A$ has to be $F_\sigma$.  By the assumption  there is a countable cover $\mathcal C$  of $X$ consisting of closed sets such that $\operatorname{diam}\chi_A(C) < 1/2$, for all $C \in \mathcal C$ and  $A \in \mathcal A$. Suppose that $\mathcal A$ is uncountable. There must exist $C \in \mathcal C$ such that $C \cap A \ne \emptyset$ for uncountable many $A \in \mathcal A$, a contradiction.

Let us show that $\bigcup \mathcal A$ is $G_\delta$. Since $\mathcal C$ is a cover of $X$, for every $A \in \mathcal A$ there is $C \in \mathcal C$ such that $A \cap C \ne \emptyset$, and this implies that
$C \subset A$ because $\on{diam} \chi_A(C)<1/2$ means that $\chi_A(C)=\{1\}$.  Hence for every $A \in \mathcal A$ we have $A = \bigcup\{C\in\mathcal C: C \subset A\}$. Thus $X \setminus \bigcup \mathcal A$ has to be an $F_\sigma$ set.
\end{proof}

Note that Proposition \ref{HL7} recovers Corollary 3.17 in \cite{BKS} which however works when $X$ is a
separable hereditary Baire metric space
since its proof is a conequence of Theorem 3.16 in \cite{BKS}. Here we deal with a more general case and we give a direct proof.

\begin{proposition} \label{h15}
Let $(X,d_X)$, $(Y,d_Y)$ be metric spaces and let $(f_n)$ be a sequence of functions $f_n:X \to Y$,
$n \in \mathbb N$, that form an equi-Lebesgue family. If $\overline{\{f_n(x): n \in \mathbb N\}}$ is compact in $(Y,d_Y)$ for every $x \in X$,
then there exists a subsequence $(f_{n_k})$ which is pointwise convergent to a Baire~1 function.
\end{proposition}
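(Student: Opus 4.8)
The plan is to combine a diagonal-argument extraction of a pointwise-convergent subsequence with the stability of the equi-Lebesgue property under pointwise limits (Proposition~\ref{lu}) and the Lebesgue characterization of Baire~1 functions (Theorem~\ref{HL1}). The compactness of each $\overline{\{f_n(x):n\in\mathbb N\}}$ is what lets a diagonal argument work when $X$ is not assumed separable; the equi-Lebesgue hypothesis is what lets us reduce to a \emph{countable} set of ``relevant'' points of $X$.

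First I would exploit the equi-Lebesgue property to produce a countable ``skeleton'' of $X$. For each $k\in\mathbb N$ pick a countable closed cover $(X^{k}_i)_{i\in\mathbb N}$ of $X$ with $\on{diam} f_n(X^k_i)\le 1/k$ for all $i,n$. From each nonempty $X^k_i$ choose a point $x^k_i$, and let $D=\{x^k_i:k,i\in\mathbb N\}$, a countable subset of $X$. Now perform a standard diagonal extraction: enumerate $D=\{d_1,d_2,\dots\}$; since $\overline{\{f_n(d_1):n\}}$ is compact (hence sequentially compact, being a compact metric space), pass to a subsequence along which $(f_n(d_1))$ converges; then thin further for $d_2$, and so on; the diagonal subsequence $(f_{n_k})$ converges at every point of $D$.

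Next I would show that $(f_{n_k})$ in fact converges at every $x\in X$. Fix $x$ and $\varepsilon>0$, choose $k$ with $1/k<\varepsilon/3$, and pick $i$ with $x\in X^k_i$; then $d^k_i:=x^k_i$ lies in the same set, so $d_Y(f_m(x),f_m(d^k_i))\le 1/k<\varepsilon/3$ for every $m$. Since $(f_{n_k}(d^k_i))$ is Cauchy, for large $k,l$ we get $d_Y(f_{n_k}(d^k_i),f_{n_l}(d^k_i))<\varepsilon/3$, and the triangle inequality gives $d_Y(f_{n_k}(x),f_{n_l}(x))<\varepsilon$. Thus $(f_{n_k}(x))$ is Cauchy; it lies in the compact (hence complete) set $\overline{\{f_n(x):n\}}$, so it converges. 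Call the limit $f(x)$; this defines $f\colon X\to Y$ with $f_{n_k}\to f$ pointwise. Finally, $\{f_{n_k}:k\in\mathbb N\}$ is equi-Lebesgue (it is a subfamily of an equi-Lebesgue family, which is trivially still equi-Lebesgue), so by Proposition~\ref{lu} the closure of this family in $\tau_p$ is equi-Lebesgue; in particular $f$ belongs to an equi-Lebesgue family, and hence $f$ has the Lebesgue property and is Baire~1 by Theorem~\ref{HL1}.

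The main obstacle is the lack of separability of $X$: a naive diagonal argument needs a countable set on which to diagonalize, and a priori there is none. The key idea dissolving this obstacle is that the equi-Lebesgue hypothesis furnishes, for each scale $1/k$, a \emph{countable} closed cover whose pieces have uniformly small image-diameter for \emph{all} $f_n$ simultaneously; picking one representative from each piece yields a countable $D$ that ``controls'' the behaviour of every $f_n$ up to $\varepsilon$, which is exactly what is needed to upgrade convergence on $D$ to convergence everywhere. One should double-check the harmless point that a compact metric space is sequentially compact and complete (so Cauchy sequences in $\overline{\{f_n(x):n\}}$ converge), which is where the compactness assumption is genuinely used.
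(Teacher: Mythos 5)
Your proof is correct and rests on the same key idea as the paper's: use the equi-Lebesgue covers $(X^k_i)$ to produce a countable set of representative points that controls every $f_n$ up to $1/k$, and then transfer convergence from that countable set to all of $X$ by a three-term triangle inequality. The only difference is the extraction mechanism: you diagonalize directly on the countable set via sequential compactness of each $\overline{\{f_n(x):n\in\mathbb N\}}$ and then obtain $f$ as the pointwise limit of a Cauchy sequence in a complete (compact) set, whereas the paper first fixes a cluster point $f$ of $(f_n)$ in the compact product $\prod_{x\in X}K_x$, uses metrizability of the countable subproduct $\prod_{x\in H}K_x$ to extract a subsequence converging to $f$ on $H$, and then upgrades to pointwise convergence on $X$ --- both routes invoke Proposition~\ref{lu} and Theorem~\ref{HL1} identically to conclude that $f$ is Baire~1.
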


\begin{proof}
For every $x \in X$ put $K_x = \overline{\{f_n(x): n \in \mathbb N\}}$.
The set $\Pi_{x \in X} K_x$ is compact and $f_n \in \Pi_{x \in X} K_x$ for every $n \in \mathbb N$. There is a function $f \in \Pi_{x \in X} K_x$ such that $f$ is a cluster point of $(f_n)$ in $\Pi_{x \in X} K_x$. By Proposition \ref{lu}, $\mathcal F = \{f_n: n \in \mathbb N\} \cup \{f\}$ is equi-Lebesgue and thus $f$ is Baire~1.

Since $\mathcal F$ is equi-Lebesgue, for every $k \in \mathbb N$ there is a cover $(X_{i}^k)_{i \in \mathbb N}$  of $X$ consisting of closed sets such that $\on{diam}g(X_{i}^k) \le 1/k$ for all $i \in \mathbb N$ and $g \in \mathcal F$. There is a countable set $H \subset X$ such that $H \cap X_{i}^k \ne \emptyset$ for all $k, i \in \mathbb N$.

The set $\Pi_{x \in H} K_x$ is first countable (even metrizable) and  $f|H$ is a cluster point of $(f_n|H)$ in $\Pi_{x \in H} K_x$. Thus there is a subsequence $(f_{n_k}|H)$ which converges to $f|H$ in $\Pi_{x \in H} K_x$. We will show that the sequence $(f_{n_k})$ pointwise converges to $f$.

Let $x \in X$ and $\varepsilon > 0$. Let $k \in \mathbb N$ be such that $3/k < \varepsilon$. There is a cover $(X_{i}^k)_{i \in \mathbb N}$  of $X$ consisting of closed sets such that $\on{diam}g(X_{i}^k) \le 1/k$ for all $i \in \mathbb N$ and $g \in \mathcal F$. Let $i \in \mathbb N$ be such that $x \in X_{i}^k$. Let $z \in X_{i}^k \cap H$. There is $l_0 \in \mathbb N$ such that $d_Y(f_{n_l}(z),f(z)) < 1/k$ for every $l \ge l_0$. Let $l \ge l_0$. Then
$$d_Y(f_{n_l}(x),f(x)) \le d_Y(f_{n_l}(x),f_{n_l}(z)) + d_Y(f_{n_l}(z)),f(z)) + d_Y(f(z),f(x))
< 3/k < \varepsilon.$$
\end{proof}

We have the following generalization of Proposition 4.5 in \cite{BKS}.

\begin{corollary} \em{(\cite{BKS} for $Y = \mathbb R$)}
Let $(X,d_X)$ be a separable metric space,  $(Y,d_Y)$ be a metric space and let $(f_n)$ be a sequence of functions $f_n:X \to Y$, $n \in \mathbb N$, that form an equi-Baire~1 family. If $\overline{\{f_n(x): n \in \mathbb N\}}$ is compact in $(Y,d_Y)$ for every $x \in X$, then there exists a subsequence $(f_{n_k})$ which is pointwise convergent to a Baire~1 function.
\end{corollary}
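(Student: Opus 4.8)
The plan is to reduce the statement to Proposition~\ref{h15} by upgrading the equi-Baire~1 hypothesis to equi-Lebesgue. Since $(X,d_X)$ is assumed separable, Fact~\ref{Ali}(ii) applies directly: the family $\{f_n : n \in \mathbb N\}$, being equi-Baire~1, is automatically equi-Lebesgue. This is the only place where separability of $X$ is used, and it is exactly the hypothesis that Proposition~\ref{h15} needs replaced (there the family is assumed equi-Lebesgue from the outset, with no separability assumption on $X$).

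Having passed to the equi-Lebesgue setting, I would simply invoke Proposition~\ref{h15} with the same sequence $(f_n)$ and the same compactness hypothesis on the closures $\overline{\{f_n(x): n \in \mathbb N\}}$, which is carried over verbatim. The conclusion — existence of a subsequence $(f_{n_k})$ converging pointwise to a Baire~1 function — is then immediate. No further work is required: the substantive content (diagonal-type extraction against a countable dense-enough set $H$ meeting all the closed pieces $X_i^k$, and the $3/k$ estimate transferring convergence from $H$ to all of $X$) is entirely contained in the proof of Proposition~\ref{h15}.

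There is essentially no obstacle here; the corollary is a formal consequence of two earlier results. The only point worth a sentence of care is making explicit that Fact~\ref{Ali}(ii) is what licenses the transition, so that the reader sees where the separability assumption is consumed and why it cannot be dropped (the remark following Fact~\ref{Ali}, using the identity map on a non-separable space, shows equi-Baire~1 does not imply equi-Lebesgue in general).

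\begin{proof}
Since $(X,d_X)$ is separable and $\{f_n : n \in \mathbb N\}$ is equi-Baire~1, this family is equi-Lebesgue by Fact~\ref{Ali}(ii). Now apply Proposition~\ref{h15}.
\end{proof}
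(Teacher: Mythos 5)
Your proof is correct and coincides with the paper's own argument: both use Fact~\ref{Ali}(ii) to convert the equi-Baire~1 hypothesis into equi-Lebesgue via separability of $X$, and then invoke Proposition~\ref{h15}. Nothing further is needed.
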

\begin{proof}
From Fact \ref{Ali}  we know that if $(X,d_X)$ is separable then the notions of
equi-Baire~1 family and equi-Lebesgue family coincide. Thus we can use Proposition \ref{h15}.
\end{proof}

The following result was obtained as Theorem 3.3 in \cite{BKS} where $X$ and $Y$ are Polish spaces.
By Fact \ref{Ali} we observe that the same proof works if $X$ is only a metric space and $Y$ is a separable metric space.

\begin{proposition} Let $(X,d_X)$ be a metric space and $(Y,d_Y)$ be a separable metric space.
 Let $\mathcal F = \{f_n: n \in \mathbb N\}$ be a family of continuous functions from $X$ to $Y$, such that the sequence $(f_n)$ is pointwise convergent on $X$. Then $\mathcal F$ is an equi-Lebesgue family.
\end{proposition}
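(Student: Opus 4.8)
The plan is to verify the equi-Lebesgue condition directly; the point is that only the separability of $Y$ and the continuity of the $f_n$ get used, so the separability of $X$ plays no role. Fix $\varepsilon>0$: we must produce a countable family of closed subsets of $X$ which covers $X$ and on each of which every $f_n$ has image of diameter at most $\varepsilon$. The first move is to split off the tails of the sequence. Since $(f_n)$ converges pointwise, for each $x\in X$ there is $m\in\mathbb N$ with $d_Y(f_i(x),f_j(x))\le\varepsilon/3$ whenever $i,j\ge m$; hence, putting
$$E_m=\bigcap_{i,j\ge m}\{x\in X:\ d_Y(f_i(x),f_j(x))\le\varepsilon/3\},$$
we get $X=\bigcup_{m\in\mathbb N}E_m$, and each $E_m$ is closed because each $f_n$ and $d_Y$ are continuous, so every set in the intersection is closed.

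Next I would refine each $E_m$ so as to also control the first $m$ functions individually. Fix a countable dense set $\{y_l:l\in\mathbb N\}$ in $Y$, so that the closed balls $B(y_l,\varepsilon/6)$ cover $Y$ and each has diameter at most $\varepsilon/3$. For $m\in\mathbb N$ and a tuple $(l_1,\dots,l_m)\in\mathbb N^m$ put
$$Z_{m,l_1,\dots,l_m}=E_m\cap\bigcap_{n=1}^{m}f_n^{-1}\bigl(B(y_{l_n},\varepsilon/6)\bigr).$$
Each such set is closed, the whole collection (over all $m$ and all tuples) is countable, and it covers $X$: given $x\in X$, pick $m$ with $x\in E_m$ and, for each $n\le m$, pick $l_n$ with $f_n(x)\in B(y_{l_n},\varepsilon/6)$.

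It remains to check that $\operatorname{diam} f_n\bigl(Z_{m,l_1,\dots,l_m}\bigr)\le\varepsilon$ for every $n$. If $n\le m$, the set is contained in $f_n^{-1}\bigl(B(y_{l_n},\varepsilon/6)\bigr)$, so its $f_n$-image has diameter at most $\varepsilon/3$. If $n>m$, take $x,y$ in the set: membership in $E_m$ gives $d_Y(f_n(x),f_m(x))\le\varepsilon/3$ and $d_Y(f_n(y),f_m(y))\le\varepsilon/3$, while $f_m(x),f_m(y)\in B(y_{l_m},\varepsilon/6)$ gives $d_Y(f_m(x),f_m(y))\le\varepsilon/3$, and the triangle inequality then gives $d_Y(f_n(x),f_n(y))\le\varepsilon$. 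I expect the only real subtlety --- and the reason these weak hypotheses suffice --- to be exactly this bookkeeping: the finitely many initial functions $f_1,\dots,f_m$ must be pinned down by hand, using their continuity and the separability of the range, whereas every function $f_n$ with $n>m$ is handled for free, being uniformly $\varepsilon/3$-close to $f_m$ on all of $E_m$. Since $\varepsilon>0$ was arbitrary, $\mathcal F$ is equi-Lebesgue; in particular it is equi-Baire~1 by Fact~\ref{Ali}(i), matching the formulation in \cite{BKS}.
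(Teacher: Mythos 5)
Your proof is correct and complete. A remark on how it relates to the paper: the paper does not actually write out a proof of this proposition --- it states that the result "was obtained as Theorem 3.3 in \cite{BKS} where $X$ and $Y$ are Polish spaces" and asserts that the same proof works under the weaker hypotheses. Your argument is therefore valuable precisely because it is self-contained and makes the claimed weakening transparent. The decomposition you use is the classical Lebesgue--Baire one: the tail sets $E_m=\bigcap_{i,j\ge m}\{x:\, d_Y(f_i(x),f_j(x))\le\varepsilon/3\}$ are closed by continuity of the $f_n$ and cover $X$ by pointwise (hence pointwise Cauchy) convergence, and the refinement by pullbacks $f_n^{-1}\bigl(B(y_{l_n},\varepsilon/6)\bigr)$ for $n\le m$ is exactly where the separability of $Y$ enters, yielding a countable closed cover on each piece of which every $f_n$ (the first $m$ by the pinning, the rest via the $\varepsilon/3$-closeness to $f_m$ on $E_m$ and the triangle inequality) has image of diameter at most $\varepsilon$. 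Two small checks that all go through: each $E_m$ is closed as an intersection of preimages of $[0,\varepsilon/3]$ under continuous maps; the collection $\{Z_{m,l_1,\dots,l_m}\}$ is a countable union over $m$ of countable families, hence countable. Your proof also exhibits directly that the separability of $X$ is irrelevant, which in the paper is only implicit in the appeal to Fact~3.1; note that the route "equi-Baire~1 plus Fact~3.1(ii)" would \emph{not} suffice here, since that implication needs $X$ separable, so a direct verification of the equi-Lebesgue property such as yours is really the right way to justify the statement as formulated.
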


%Using Remark \ref{hol11}  we see that  Theorem 3.3 in \cite{BKS} holds if $(X,d_X)$ is only a  metric space and  $(Y,d_Y)$ is a separable metric space.

\section{Properties concerning uniform convergence}

Let us define the topology $\tau_{U}$ of uniform convergence  on $Y^X$. This topology is induced by the
uniformity $\mathfrak U_{U}$  which has a base consisting of sets of the
form
$$W(\varepsilon )=\{(f,g):\ \forall\ x\in X\ \ d(f(x),g(x))<
\varepsilon \},$$
where $\varepsilon >0$. The general
$\tau_{U}$-basic neighborhood of $f\in Y^X$ will be denoted by
$W(f,\varepsilon )$, i.e. $W(f,\varepsilon)=W(\varepsilon )[f]$. In other words,
$$W(f,\varepsilon) = \{g \in Y^X:\ d(f(x),g(x)) < \varepsilon\ \text{for every}\ x \in X\}.$$
It is easy to verify that the countable family $\{W(1/n): n \in \mathbb{N}\}$  is a base of the
uniformity $\mathfrak U_{U}$.  Thus the uniformity $\mathfrak U_{U}$  is metrizable \cite{Ke}.

Finally, let $(X,d)$ be a metric space. We will define the topology $\tau_{UB}$ of uniform convergence on
bounded sets on $Y^X$. This topology is induced by the
uniformity $\mathfrak U_{UB}$  which has a base consisting of sets of the
form
$$W(B,\varepsilon )=\{(f,g):\ \forall\ x\in B\ \ d(f(x),g(x))<
\varepsilon \},$$
where $B$ is a bounded subset of $X$ and $\varepsilon >0$. A general
$\tau_{UB}$-basic neighborhood of $f\in Y^X$ will be denoted by
$W(f,B,\varepsilon )$, i.e. $W(f,B,\varepsilon)=W(B,\varepsilon )[f]$. In other words,
$$W(f,B,\varepsilon) = \{g \in Y^X:\ d(f(x),g(x)) < \varepsilon\ \text{for every}\ x \in B\}.$$
It is easy to verify that the uniformity  $\mathfrak U_{UB}$ is metrizable. Let $x_0$ be an arbitrary point in $X$. For every $n \in \mathbb{N}$ put
$B_n = B(x_0,n)$. Then the countable family $\{W(B_n,1/k): n, k \in  \mathbb{N}\}$  is a base of the uniformity $\mathfrak U_{UB}$.

\begin{proposition}
Let  $(f_n)$ be a sequence of bounded  real-valued Baire~1 functions defined on a metric  space $X$. If $(f_n)$ is uniformly convergent to $f\colon X \to \mathbb{R}$, then for every $\varepsilon > 0$ there is a finite cover $\mathcal B$ of $X$ consisting  of subsets of $X$ which are  $F_\sigma$ and  such that $\on{diam}f_{n}(B) \le \varepsilon$ for all $B \in \mathcal B$ and $n \in \mathbb N$.
\end{proposition}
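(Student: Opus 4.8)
The strategy is to use uniform convergence to replace the infinite family $\{f_n:n\in\mathbb N\}$ by a \emph{finite} subfamily, and then to treat that finite subfamily by bundling its members into a single map into a Euclidean space, where a bounded box is totally bounded.

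Fix $\varepsilon>0$. Since $(f_n)$ converges uniformly it is uniformly Cauchy, so I would first choose $N\in\mathbb N$ with $\sup_{x\in X}|f_n(x)-f_N(x)|\le\varepsilon/3$ for all $n\ge N$. Each of $f_1,\dots,f_N$ is bounded, so fix $M>0$ with $|f_j(x)|\le M$ for all $x\in X$ and $j\in\{1,\dots,N\}$, and define $G\colon X\to\mathbb R^N$ by $G(x)=(f_1(x),\dots,f_N(x))$. Then $G(X)\subseteq K:=[-M,M]^N$, a totally bounded subset of $\mathbb R^N$ with the Euclidean metric.

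The point of passing to $G$ is that preimages of open sets stay in the class $F_\sigma$: if $W\subseteq\mathbb R^N$ is open, write it as a countable union of open boxes $\prod_{j=1}^N(a_j,b_j)$; then $G^{-1}(W)$ is the corresponding countable union of finite intersections $\bigcap_{j=1}^N f_j^{-1}\big((a_j,b_j)\big)$, and since each $f_j$ is Baire~1 every $f_j^{-1}\big((a_j,b_j)\big)$ is $F_\sigma$, whence $G^{-1}(W)$ is $F_\sigma$ (finite intersections and countable unions of $F_\sigma$ sets being $F_\sigma$). Now cover the totally bounded set $K$ by finitely many open Euclidean balls $U_1,\dots,U_p$, each of radius $\varepsilon/6$, and set $\mathcal B=\{\,G^{-1}(U_k):k=1,\dots,p\,\}$. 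Each member of $\mathcal B$ is $F_\sigma$; $\mathcal B$ covers $X$ because $\bigcup_k U_k\supseteq K\supseteq G(X)$; and for $x,y\in G^{-1}(U_k)$ and $j\le N$ one has $|f_j(x)-f_j(y)|\le\|G(x)-G(y)\|\le\on{diam}U_k\le\varepsilon/3$, so $\on{diam}f_j(B)\le\varepsilon/3$ for every $B\in\mathcal B$ and $j\le N$.

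Finally I would check the required estimate for \emph{all} indices. If $n\le N$ then $\on{diam}f_n(B)\le\varepsilon/3\le\varepsilon$ for $B\in\mathcal B$. If $n>N$ then, using $\sup_x|f_n(x)-f_N(x)|\le\varepsilon/3$, I get $\on{diam}f_n(B)\le\on{diam}f_N(B)+2\sup_x|f_n(x)-f_N(x)|\le\varepsilon/3+2\varepsilon/3=\varepsilon$. The one place to be careful is the choice of \emph{open} balls $U_k$ rather than closed ones: closed balls would yield $G_\delta$ preimages, whereas the proposition demands $F_\sigma$ sets, and open balls are exactly what makes the preimages $F_\sigma$. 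Finiteness of the cover then costs nothing — it is just total boundedness of the box $K$, which only became available after reducing to finitely many functions; no property of the pointwise limit $f$ (not even that it is Baire~1) is needed.
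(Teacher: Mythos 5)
Your proof is correct and follows essentially the same route as the paper's: reduce to a finite subfamily of Baire~1 functions that uniformly $\varepsilon/3$-approximates every $f_n$, cover the bounded range by finitely many open sets of diameter at most $\varepsilon/3$, and take the (automatically $F_\sigma$) common refinement of their preimages. The only cosmetic differences are that you extract the finite subfamily from the uniform Cauchy condition (a single tail function $f_N$) instead of from compactness of $\{f\}\cup\{f_n: n\in\mathbb N\}$ in the topology of uniform convergence, and that you package the finite intersections of preimages as preimages under a single map into $\mathbb R^N$.
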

\begin{proof} Let $\varepsilon > 0$.
Put $\mathcal F = \{f, f_1, f_2, \dots, f_n, \dots \}$. Then $\mathcal F$ is a compact set in $(\mathbb{R}^X,\tau_U)$. There are functions $f_{k_1}, f_{k_2}, \dots f_{k_n} \in \mathcal F$ such that
	$$\mathcal F \subseteq \bigcup \{W(f_{k_j},\varepsilon/3): j\in\{1,\dots, n\}\}.$$
For simplicity  put $g_j = f_{k_j}$ for every $j\in\{1,\dots, n\}.$  Let $M > 0$ be such that
$$\bigcup \{g_j(X): j\in\{1,\dots, n\}\} \subset [-M,M].$$
Let $\mathcal V = \{V_1, \dots, V_m\}$ be a finite open cover of $[-M,M]$, where the diameter of members of this cover is less than $\varepsilon/3$. For each $i\in\{1,\dots, m\}$ and $j\in\{1,\dots, n\}$ let
$B_i^j  = g_j^{-1}(V_i)$.
Since $g_j$ is a Baire~1 function, the set $B_i^j$ is $F_\sigma$ for all $j\in\{1,\dots, n\}$ and $i\in\{1,\dots, m\}$. Put
$$\mathcal B' = \{B_{i_1}^1 \cap \dots \cap B_{i_n}^n: i_j \in \{1,\dots, m\}\ \text{for each}\ j \in \{1,\dots, n\}\}.$$
Every set $B \in \mathcal B'$ is of type $F_\sigma$. Let $\mathcal{B}$ be the family consisting of all nonempty sets from $\mathcal B'$.  Clearly $X =\bigcup\mathcal B$. One can check that $|f_i(p) - f_i(q)| <\varepsilon$
for all $f_i \in \mathcal F$, $B\in\mathcal{B}$ and $p, q \in B$.
\end{proof}

\begin{theorem} \label{HH}
Let  $(f_n)$ be a sequence of bounded  real-valued Baire~1 functions defined on a Baire metric  space $X$. If $(f_n)$ is uniformly convergent to $f\colon X \to \mathbb{R}$, then for every $\varepsilon > 0$ there is a finite cover $\mathcal B$ of $X$ consisting  of subsets of $X$ which are either open or  $F_\sigma$ nowhere dense and  such that $\on{diam}f_{n}(B) \le \varepsilon$ for all $B \in \mathcal B$ and $n \in \mathbb N$.
\end{theorem}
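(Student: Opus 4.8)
The plan is to sharpen the finite $F_\sigma$ cover produced by the preceding Proposition, invoking the Baire property of $X$ at exactly one point. Fix $\varepsilon>0$ and let $\{B_1,\dots,B_r\}$ be a finite cover of $X$ by $F_\sigma$ sets with $\on{diam}f_n(B_i)\le\varepsilon$ for all $i\le r$ and all $n\in\mathbb N$. For each $i$ put $G_i=\on{int}(B_i)$. Then $G_i$ is open and $G_i\subseteq B_i$, so $\on{diam}f_n(G_i)\le\varepsilon$ for every $n$; these will be the open members of the cover we are building.

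The main point is to check that $G:=\bigcup_{i=1}^r G_i$ is dense in $X$. Each set $B_i\setminus G_i$ is $F_\sigma$ (it is the intersection of the $F_\sigma$ set $B_i$ with the closed set $X\setminus G_i$, so writing $B_i=\bigcup_n F_{i,n}$ with $F_{i,n}$ closed gives $B_i\setminus G_i=\bigcup_n(F_{i,n}\setminus G_i)$), and it has empty interior (an open set contained in $B_i\setminus\on{int}B_i$ is an open set contained in $B_i$, hence contained in $\on{int}B_i$, hence empty). Therefore $B_i\setminus G_i$ is a countable union of closed sets each with empty interior, i.e.\ a meager set. Since $\bigcup_{i=1}^r B_i=X$, every point of $X$ outside $G$ lies in some $B_i$ but in no $G_j$, so $X\setminus G\subseteq\bigcup_{i=1}^r(B_i\setminus G_i)$, and hence $X\setminus G$ is meager. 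Because $X$ is a Baire space, the complement of a meager set is dense, so $G$ is dense. This density step is the only place where the Baire hypothesis on $X$ is used, and it is the one non-formal ingredient of the argument; without it $\bigcup_i G_i$ could fail to be dense and the scheme would collapse.

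It remains to treat the leftover $L:=X\setminus G$. It is closed (as the complement of the open set $G$), and since $G$ is dense, $\on{int}(L)=X\setminus\overline{G}=\emptyset$, so $L$ is a closed, hence $F_\sigma$, nowhere dense set. As $L\subseteq X=\bigcup_{i=1}^r B_i$ we may write $L=\bigcup_{i=1}^r(L\cap B_i)$, and each $L\cap B_i$ is $F_\sigma$ (intersection of a closed set and an $F_\sigma$ set), is nowhere dense (a subset of the nowhere dense set $L$), and satisfies $\on{diam}f_n(L\cap B_i)\le\varepsilon$ for all $n$ (a subset of $B_i$). Then $\mathcal B=\{G_1,\dots,G_r\}\cup\{L\cap B_1,\dots,L\cap B_r\}$, with empty members deleted, is a finite cover of $X$ (since $G\cup L=X$) whose members are either open or $F_\sigma$ nowhere dense and all of whose members have $f_n$-diameter at most $\varepsilon$ for every $n$, which is the assertion. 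I do not expect any obstacle beyond the density step; in particular, observe that this route never needs the limit $f$ to be Baire~1.
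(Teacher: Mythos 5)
Your proof is correct, but it takes a genuinely different route from the paper's. You treat the preceding Proposition (the finite $F_\sigma$ cover with small $f_n$-diameters) as a black box and then establish what is really a purely topological refinement lemma: in a Baire space, any finite cover $\{B_1,\dots,B_r\}$ by $F_\sigma$ sets refines to a finite cover by sets that are either open or $F_\sigma$ nowhere dense, each contained in some $B_i$ (so all diameter bounds are inherited for free). Your key step --- that $\bigcup_{i}\on{int}(B_i)$ is dense because each $B_i\setminus\on{int}(B_i)$ is a countable union of closed sets with empty interior, hence meager --- is sound, and the leftover set, being closed and nowhere dense, is correctly chopped by the $B_i$ into $F_\sigma$ nowhere dense pieces. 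The paper instead re-runs the total-boundedness argument from scratch: it picks finitely many $\varepsilon/3$-representatives $g_j$ of the sequence, uses the fact that each $g_j$ (a real-valued Baire~1 function on a Baire space) is continuous on a dense $G_\delta$ set, builds its open pieces as intersections of neighbourhoods of continuity points, and covers the residual boundaries $\overline U\setminus U$ by $F_\sigma$ nowhere dense sets cut out by the values of the $g_j$. Your version is shorter, avoids any appeal to continuity points of Baire~1 functions, and makes the division of labour cleaner: the function-theoretic content sits entirely in the preceding Proposition and the category-theoretic content entirely in the refinement. The paper's version is self-contained and realizes the open members of the cover explicitly as unions of neighbourhoods of continuity points, but that extra structure is not needed for the statement as given.
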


\begin{proof}
Let $\varepsilon > 0$.
Put $\mathcal F = \{f, f_1, f_2, \dots, f_n, \dots \}$. Then $\mathcal F$ is a compact set in $(\mathbb{R}^X,\tau_U)$. There are functions $f_{k_1}, f_{k_2}, \dots , f_{k_n} \in \mathcal F$ such that
	$$\mathcal F \subseteq \bigcup \{W(f_{k_j},\varepsilon/3): j\in\{1,\dots, n\}\}.$$
For simplicity  put $g_j = f_{k_j}$ for every $j\in\{1,\dots, n\}\}.$  Let $M > 0$ be such that
$$\bigcup \{g_j(X): j\in\{1,\dots, n\}\} \subset [-M,M].$$
Let $\mathcal V = \{V_1, \dots, V_m\}$ be a finite open cover of $[-M,M]$, where the diameter of members of this cover is less than $\varepsilon/3$.
	For each $i\in\{1,\dots, m\}$ and $j\in\{1,\dots, n\}$ let
	$$L_i^j  = \{z \in C(g_j): g_j(z) \in V_i\},$$
	where $C(g_j) = \{z \in X:\ g_j\ \text{is continuous at}\ z\}$. Since $X$ is a Baire space, $C(g_j)$ is a dense $G_\delta$ set in $X$ for every $j\in\{1,\dots, n\}$.

 For each $z \in L_i^j$, let $W_i^j(z)$ be an open neighbourhood of $z$ such that $g_j(W_i^j(z)) \subset V_i$.  Let
	$$W_i^j = \bigcup \{W_i^j(z):\ z \in L_i^j\}$$	
and	
	$$\mathcal U = \{W_{i_1}^1 \cap \dots \cap W_{i_n}^n: i_j \in \{1,\dots, m\}\ \text{for each}\ j \in \{1,\dots, n\}\}.$$
For each $U \in \mathcal U$, let
	$$P_i^j(U) = \{z \in \overline U \setminus U:\ g_j(z) \in V_i\}$$
and		
	$$\mathcal P = \{P_{i_1}^1(U) \cap \dots \cap P_{i_n}^n(U): U \in \mathcal U \text{ and }i_j \in \{1,\dots, m\}\ \text{for each}\ j \in \{1,\dots, n\}\}.$$
Since $g_j$ is a Baire~1 function, the set $P_i^j(U)$ is of type $F_\sigma$ for all $j\in\{1,\dots, n\}$, $i\in\{1,\dots, m\}$
and $U \in \mathcal U$. Thus also every set $P \in \mathcal P$ is of type $F_\sigma$ and of course nowhere dense.

We show that $X = \bigcup\mathcal U\cup \bigcup\mathcal P$. The set $\bigcup\mathcal U$ is open and dense. Let $G$ be a nonempty open set in $X$. Choose some $u\in G\cap C(g_1)\cap \dots\cap C(g_n)$. Then for each $j\in \{1,\dots, n\}$ we have $u\in W_{i_j}^j$ for some $i_j\in\{1,\dots, m\}$. Thus  $u\in G\cap W_{i_1}^1 \cap \dots \cap W_{i_n}^n$, i.e. $G \cap \bigcup\mathcal U \ne \emptyset.$

Let $z\in X \setminus \bigcup\mathcal U$. Then $z\in\overline{\bigcup\mathcal U}$. Hence there is $U\in\mathcal U$ such that $z\in\overline U\setminus U$. It follows that for every $j \in \{1,\dots, n\}$ there is  $i_j \in \{1,\dots, m\}$ such that $z\in P_{i_j}^j(U)$ and then $z\in\bigcup \mathcal P$. Hence $X = \bigcup\mathcal U\cup \bigcup\mathcal P$.
	
Finally, let $\mathcal{B}$ be the family consisting of all nonempty sets from $\mathcal U$ and  $\mathcal P$.
One can check that $|f_i(p) - f_i(q)| <\varepsilon$ for all $f_i \in \mathcal F$, $B\in\mathcal{B}$ and $p, q \in B$.
\end{proof}

\begin{theorem} \label{lh}
Let  $(f_n)$ be a pointwise bounded sequence of real-valued  functions defined on a topological space $X$. Suppose that for every $\varepsilon > 0$ there is a finite cover $\mathcal B$ of $X$ such that $\on{diam}f_{n}(B) \le \varepsilon$ for every $B \in \mathcal B$ and for every $n \in \mathbb N$. Then $(f_n)$ contains a subsequence which is uniformly convergent on $X$.
\end{theorem}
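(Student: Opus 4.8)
The plan is to adapt the classical Arzel\`a--Ascoli diagonal argument, with the finite-cover-of-small-oscillation hypothesis playing the role normally played by equicontinuity. First I would, for each $k\in\mathbb N$, fix a finite cover $\mathcal B_k=\{B_{k,1},\dots,B_{k,m_k}\}$ of $X$ witnessing the assumption for $\varepsilon=1/k$, and for each nonempty member $B_{k,i}$ pick a point $x_{k,i}\in B_{k,i}$. The set $D=\{x_{k,i}:k\in\mathbb N,\ B_{k,i}\neq\emptyset\}$ is a countable union of finite sets, hence countable. Since $(f_n)$ is pointwise bounded, $(f_n(x))_n$ is a bounded real sequence for each $x\in D$, so the Bolzano--Weierstrass theorem together with the standard diagonalization over the countable set $D$ yields a subsequence $(f_{n_j})_j$ for which $(f_{n_j}(x))_j$ converges for every $x\in D$.

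Next I would verify that $(f_{n_j})_j$ is uniformly Cauchy on $X$. Given $\varepsilon>0$, pick $k$ with $3/k<\varepsilon$ and look at the finite cover $\mathcal B_k$. Because $\mathcal B_k$ has only finitely many (nonempty) members, the sequences $(f_{n_j}(x_{k,i}))_j$ converge for all of the finitely many relevant $i$, so there is $J$ with $|f_{n_j}(x_{k,i})-f_{n_{j'}}(x_{k,i})|<1/k$ for all $j,j'\ge J$ and all such $i$. For an arbitrary $x\in X$ choose $i$ with $x\in B_{k,i}$; since $x$ and $x_{k,i}$ both lie in $B_{k,i}$ and $\on{diam}f_{n_j}(B_{k,i})\le 1/k$ for every $j$, the triangle inequality gives
$$|f_{n_j}(x)-f_{n_{j'}}(x)|\le |f_{n_j}(x)-f_{n_j}(x_{k,i})|+|f_{n_j}(x_{k,i})-f_{n_{j'}}(x_{k,i})|+|f_{n_{j'}}(x_{k,i})-f_{n_{j'}}(x)|<3/k<\varepsilon$$
for all $j,j'\ge J$. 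Since this estimate is uniform in $x$, the subsequence $(f_{n_j})_j$ is uniformly Cauchy on $X$.

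Finally, as $\mathbb R$ is complete, a uniformly Cauchy sequence of real-valued functions converges uniformly; hence $(f_{n_j})_j$ converges uniformly on $X$ to some $f\colon X\to\mathbb R$, which completes the argument. I do not foresee a genuine obstacle here: the only points needing a little care are the remark that $D$ is countable (a countable union of finite sets), the routine diagonal extraction, and the choice of a representative point in each cover member. Conceptually, the finiteness of each cover is exactly what makes the scheme go through, since it reduces control of $\sup_{x\in X}|f_{n_j}(x)-f_{n_{j'}}(x)|$ to control at finitely many points together with the uniform oscillation bound $1/k$.
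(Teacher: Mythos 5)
Your proof is correct, and it takes a genuinely different (and more elementary) route than the paper. The paper embeds $(f_n)$ into the compact product $\Pi_{x\in X}[-M_x,M_x]$, extracts a cluster point $f$ and a subnet $(g_\sigma)$ converging pointwise to $f$, shows directly that this subnet converges \emph{uniformly} to $f$ (after first checking that the oscillation bounds $\on{diam}g_\sigma(B)\le\eta$ pass to the limit function $f$), and only then recovers a genuine subsequence by observing that $f$ is a cluster point of $(f_n)$ in the metrizable space $(\mathbb R^X,\tau_U)$. You instead fix, for each $k$, a finite cover witnessing the hypothesis for $\varepsilon=1/k$, choose one representative point per cover member, run the classical Bolzano--Weierstrass diagonal extraction over the resulting countable set $D$, and verify that the extracted subsequence is uniformly Cauchy; completeness of $\mathbb R$ finishes the job. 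Your version avoids nets, subnets, Tychonoff's theorem and the metrizability discussion entirely, and it produces the subsequence constructively in one pass rather than via a cluster-point argument; the paper's version has the mild advantage of exhibiting the limit $f$ explicitly from the outset and of transferring verbatim to the $Y$-valued setting mentioned after the theorem (where your final step would use pointwise Cauchyness inside the compact sets $\overline{\{f_n(x):n\in\mathbb N\}}$ in place of completeness of the target). All the steps you flag as needing care (countability of $D$, the diagonal extraction, the choice of representatives in nonempty cover members) are indeed routine and correctly handled.
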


\begin{proof}
For every $x \in X$ let $M_x > 0$ be such that
$$|f_n(x)| \le M_x\;\mbox{ for every  }\;n \in \mathbb N.$$
%As before denote by $B(X,[-M,M])$ the family of all functions from $X$ to $[-M,M]$. Let $d$ be the usual metric of uniform convergence on $B(X,[-M,M])$. If $f \in B(X,[-M,M])$ %and $r \in \Bbb R$, $r > 0$, put $S(f,r) = \{h: h \in B(X,[-M,M]), d(f,h) < r\}$.
The set $\Pi_{x \in X}[-M_x,M_x]$ is compact and for every $n \in \mathbb N$,  $f_n \in \Pi_{x \in X}[-M_x,M_x]$. There is a function $f \in \Pi_{x \in X}[-M_x,M_x]$ such that $f$ is a cluster point of $(f_n)$ in $\Pi_{x \in X}[-M_x,M_x]$. There is a subnet $(g_\sigma)$ of $(f_n)$ which pointwise converges to $f$.
First, observe the following: Let  $\eta > 0$  and $\mathcal B$ be a finite cover of $X$ such that $\on{diam}g_\sigma(B) \le \eta$ for all $B \in \mathcal B$ and every $\sigma$. Then also $\on{diam}f(B) \le \eta$ for every $B \in \mathcal B$.

We will show that $(g_\sigma)$ uniformly converges to $f$. Let $\varepsilon > 0$. There is a finite cover $\mathcal D$ of $X$ such that $\on{diam}g_\sigma(D) \le \varepsilon/3$ for every $D \in \mathcal D$ and for every $\sigma$ and also $\on{diam}f(D) \le \varepsilon/3$ for every $D \in \mathcal D$. For each $D \in \mathcal D$ choose $x_D \in D$. For every $D \in \mathcal D$ there is $\sigma_D$ such that $|f(x_D) - g_\sigma(x_D)| < \varepsilon/3$ for every $\sigma \ge \sigma_D$. Let $\sigma_0$ be such that $\sigma_0 \ge \sigma_D$ for all $D \in \mathcal D$. Such $\sigma_0$ exists since the cover $\mathcal D$ is finite. We claim that
$$|g_\sigma(x) - f(x)| < \varepsilon\;\mbox{ for every }\; \sigma \ge \sigma_0\;\mbox{  and for every }\;x \in X.$$
Let $x \in X$. There is $D \in \mathcal D$ such that $x \in D$. Let $\sigma \ge \sigma_0 \ge \sigma_D$. Then
$$|g_\sigma(x) - f(x)| \le |g_\sigma(x) - g_\sigma(x_D)| + |g_\sigma(x_D) - f(x_D)| + |f(x_D) - f(x)| $$
$$<\varepsilon/3 + \varepsilon/3 + \varepsilon/3 = \varepsilon.$$
Thus $(g_\sigma)$ uniformly converges to $f$, i.e. $(g_\sigma)$ converges to $f$  in the metrizable space $(\mathbb{R}^X,\tau_U)$.  Then $f$ is a cluster point of $(f_n)$  in $(\mathbb{R}^X,\tau_U)$. Since $(\mathbb{R}^X,\tau_U)$ is metrizable, there is a subsequence $(f_{n_k})$ of $(f_n)$ which converges to $f$ in $(\mathbb{R}^X,\tau_U)$; i.e.
$(f_{n_k})$ uniformly converges to $f$.
\end{proof}

Notice that Theorem \ref{lh} works also for functions from a topological space $X$ to a metric space $(Y,d)$ under the assumption that $\overline{\{f_n(x): n \in \mathbb N\}}$ is compact for every $x \in X$.

%\begin{corollary} \label{C2}
%Let  $(f_n)$ be a uniformly bounded sequence of real-valued  functions defined on a topological space $X$. Suppose that for every $\varepsilon > 0$ there is a finite cover $\mathcal B$ of $X$ such that $\on{diam}f_{n}(B) \le \varepsilon$ for every $B \in \mathcal B$ and for every $n \in \mathbb N$. Then $(f_n)$ contains a subsequence which is uniformly convergent on $X$.
%\end{corollary}

We have the following answer to Question 4.3 from \cite{BKS}.

\begin{theorem}
Let $(f_n)$ be a uniformly bounded sequence  of real-valued Baire~1 functions defined on a Baire metric space $X$. The following are equivalent:
\begin{itemize}
\item[(1)] $(f_n)$ contains a subsequence which is uniformly convergent on $X$;
\item[(2)] There is a subsequence $(f_{n_k})$ of $(f_n)$ with the following property: for every $\varepsilon > 0$ there is a finite cover $\mathcal B$ of $X$ consisting  of subsets of $X$ which are either open or $F_\sigma$ nowhere dense such that $\on{diam}f_{n_k}(B) \le \varepsilon$ for every $B \in \mathcal B$ and for every $k \in \mathbb N$.
\end{itemize}
\end{theorem}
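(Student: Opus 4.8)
The statement is essentially a repackaging of the two preceding theorems, so the plan is to prove the two implications by direct appeal to Theorem~\ref{HH} and Theorem~\ref{lh}, respectively, with no further machinery needed. The only thing to watch is that the hypotheses of the present statement match exactly what those theorems demand.

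\emph{Implication (1) $\Rightarrow$ (2).} Suppose $(f_{n_k})$ is a subsequence of $(f_n)$ that converges uniformly on $X$ to some $f\colon X\to\mathbb R$. First I would note that $(f_{n_k})$ is still a sequence of bounded real-valued Baire~1 functions (each $f_{n_k}$ is one of the $f_n$), that $X$ is a Baire metric space by hypothesis, and that its uniform limit $f$ is automatically bounded since the $f_n$ are uniformly bounded; hence all hypotheses of Theorem~\ref{HH} are met for the sequence $(f_{n_k})$. Applying Theorem~\ref{HH} to $(f_{n_k})$ gives, for each $\varepsilon>0$, a finite cover $\mathcal B$ of $X$ by sets that are either open or $F_\sigma$ nowhere dense and satisfy $\on{diam} f_{n_k}(B)\le\varepsilon$ for all $B\in\mathcal B$ and all $k$. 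This is precisely property~(2), realized by the very subsequence we started with.

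\emph{Implication (2) $\Rightarrow$ (1).} Suppose $(f_{n_k})$ is a subsequence with the cover property in~(2). Since $(f_n)$ is uniformly bounded, the subsequence $(f_{n_k})$ is in particular pointwise bounded, and property~(2) furnishes, for every $\varepsilon>0$, a finite cover $\mathcal B$ of $X$ with $\on{diam} f_{n_k}(B)\le\varepsilon$ for all $B\in\mathcal B$ and all $k$ — one may here simply discard the information that the members of $\mathcal B$ are open or $F_\sigma$ nowhere dense, as only the covering and small-oscillation properties are used. Thus the hypotheses of Theorem~\ref{lh} hold for $(f_{n_k})$ (viewing $X$ merely as a topological space), and Theorem~\ref{lh} yields a subsequence of $(f_{n_k})$ that converges uniformly on $X$. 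That sub-subsequence is a subsequence of $(f_n)$, which is~(1).

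\emph{Main obstacle.} Frankly there is no substantial obstacle: the content lies entirely in Theorems~\ref{HH} and~\ref{lh}, and the present result only glues them together. The single point requiring a line of care is, in (1)~$\Rightarrow$~(2), checking that the uniform limit $f$ is bounded so that Theorem~\ref{HH} applies to the family $\{f,f_{n_1},f_{n_2},\dots\}$; this follows at once from uniform convergence together with the uniform bound on $(f_n)$.
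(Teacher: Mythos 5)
Your proposal is correct and follows exactly the paper's own argument: (1) $\Rightarrow$ (2) is a direct application of Theorem~\ref{HH} to the uniformly convergent subsequence, and (2) $\Rightarrow$ (1) applies Theorem~\ref{lh} to extract a further uniformly convergent subsequence, which is still a subsequence of $(f_n)$. The extra check that the limit $f$ is bounded is harmless but not needed, since Theorem~\ref{HH} only assumes boundedness of the terms of the sequence.
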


\begin{proof}
$(1) \Rightarrow (2)$  follows from Theorem \ref{HH}.

$(2) \Rightarrow (1)$: By $(2)$ consider a subsequence $(f_{n_k})$ of $(f_n)$ described in (2). By Theorem \ref{lh} there is a subsequence of the sequence $(f_{n_k})$ which uniformly converges on $X$. However, this subsequence of $(f_{n_k})$ is also a subsequence of $(f_n)$. Thus $(f_n)$ contains a subsequence which is uniformly convergent on $X$.
\end{proof}

We propose the following generalization of Theorem 3.1 from \cite{BKS}.

\begin{proposition}
Let $(X,d_X)$ be a metric space and $(Y,d_Y)$ be a separable metric space. Let  $(f_n)$ be a sequence of  Baire~1 functions from $X$ to $Y$. If $(f_n)$ converges uniformly on bounded sets to
$f\colon X \to Y$, then the family $\{f_n: n \in \mathbb{N}\}$ is equi-Lebesgue. Thus,
by Fact \ref{Ali}, it is equi-Baire~1.
\end{proposition}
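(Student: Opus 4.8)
The plan is to fix an arbitrary $\varepsilon>0$ and to build a single \emph{countable} cover of $X$ by closed sets that works simultaneously for all the $f_n$. Since $X$ need not be bounded, the first step is to exhaust it: fix $x_0\in X$ and put $B_k=B(x_0,k)$ for $k\in\mathbb N$, so that $X=\bigcup_{k}B_k$, each $B_k$ is closed and bounded, and hence $(f_n)$ converges uniformly to $f$ on each $B_k$.

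Next, for a fixed $k$, I would choose $m=m(k)\in\mathbb N$ so large that $d_Y(f_n(x),f(x))<\varepsilon/6$ for all $n\ge m$ and all $x\in B_k$; then $d_Y(f_n(x),f_m(x))<\varepsilon/3$ for all $n\ge m$ and $x\in B_k$. The finitely many functions $f_1|_{B_k},\dots,f_m|_{B_k}$ are Baire~1 on the metric space $B_k$ (a restriction of a Baire~1 function is Baire~1), and since $Y$ is separable each of them has the Lebesgue property by the implication (1)$\Rightarrow$(2) of Theorem~\ref{HL1} --- it is essential here that this implication needs only $Y$ separable, because $X$, and hence $B_k$, is not assumed separable, so Fact~\ref{Ali}(ii) is unavailable on the pieces. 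Intersecting the finitely many resulting closed covers of $B_k$ (one per function) produces a countable family $\mathcal C_k$ of subsets of $B_k$, closed in $B_k$ and covering $B_k$, with $\on{diam} f_j(C)\le\varepsilon/3$ for every $C\in\mathcal C_k$ and every $j\le m$. Since $B_k$ is closed in $X$, every member of $\mathcal C_k$ is closed in $X$; and for $n>m$ and $C\in\mathcal C_k$ the triangle inequality gives, for all $p,q\in C$,
$$d_Y(f_n(p),f_n(q))\le d_Y(f_n(p),f_m(p))+\on{diam} f_m(C)+d_Y(f_m(q),f_n(q))<\varepsilon,$$
so that $\on{diam} f_n(C)\le\varepsilon$ for every $n\in\mathbb N$ and every $C\in\mathcal C_k$.

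Finally, I would set $\mathcal C=\bigcup_{k\in\mathbb N}\mathcal C_k$. This is a countable union of countable families, hence countable; it consists of closed subsets of $X$; it covers $X$ because the $B_k$ do; and $\on{diam} f_n(C)\le\varepsilon$ for every $C\in\mathcal C$ and every $n\in\mathbb N$. As $\varepsilon>0$ was arbitrary, $\{f_n:n\in\mathbb N\}$ is equi-Lebesgue, and then equi-Baire~1 by Fact~\ref{Ali}(i).

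The two points that need care are that uniform convergence is hypothesized only on bounded sets, which is exactly what forces the exhaustion $X=\bigcup_kB_k$ and the patching of countably many countable covers into one, and that $X$ may fail to be separable, which rules out applying Fact~\ref{Ali}(ii) locally and makes it necessary to invoke the ``$Y$ separable only'' half of Theorem~\ref{HL1} to control the finite subfamily $f_1,\dots,f_{m(k)}$ on each $B_k$; note also that comparing the tail functions with $f_{m(k)}$ rather than with $f$ avoids having to argue separately that $f$ is Baire~1. Once these are handled, the remainder is a routine triangle-inequality estimate, so I do not expect a genuine obstacle.
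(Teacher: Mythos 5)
Your proof is correct and follows essentially the same route as the paper's: exhaust $X$ by the closed balls $B(x_0,k)$, use uniform convergence on each ball to reduce control of the whole sequence to finitely many of the $f_n$, and then use the separability of $Y$ (via the Lebesgue property of Baire~1 functions) to produce a countable cover of the ball by small closed (or $F_\sigma$) pieces. The only cosmetic difference is that the paper obtains its finite subfamily as an $\varepsilon/3$-net from total boundedness in $\tau_{UB}$ and builds the cover by pulling back a countable open cover of $Y$, whereas you compare the tail with $f_{m(k)}$ and intersect the covers supplied by Theorem~\ref{HL1}; these amount to the same argument.
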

\begin{proof}
Let $x_0$ be an arbitrary point in $X$. For every $n \in \mathbb{N}$ put
$B_n = B(x_0,n)$. Let $l \in \mathbb{N}$ and $\varepsilon > 0$.
Put $\mathcal F = \{f_1, f_2, \dots, f_n, \dots \}$. Then $\mathcal F$ is a totally bounded set in $(Y^X,\tau_{UB})$. There are functions $f_{k_1}, f_{k_2}, \dots f_{k_n} \in \mathcal F$ such that
	$$\mathcal F \subseteq \bigcup \{W(f_{k_j},B_l,\varepsilon/3): j\in\{1,\dots , n\}\}.$$
For simplicity  put $g_j = f_{k_j}$ for every $j\in\{1,\dots , n\}.$

Let $\mathcal V = \{V_1, \dots, V_n, \dots\}$ be a countable open cover of $Y$, where diameter of members of this cover is less than $\varepsilon/3$. For every  $j\in\{1,\dots, n\}$ and  $i \in \mathbb{N}$ put $B_i^j  = g_j^{-1}(V_i)$.
Since $g_j$ is a Baire 1 function, $B_i^j$ is an $F_\sigma$ set for all $j\in\{1,\dots, n\}$ and $i\in \mathbb{N}$. Put
$$\mathcal B' = \{B_l \cap B_{i_1}^1 \cap \dots \cap B_{i_n}^n: i_j \in \mathbb{N},\; j \in \{1,\dots, n\}\}.$$
Every set $B \in \mathcal B' $ is of type $F_\sigma$. Let $\mathcal{B}$ be the family containing all nonempty sets from $\mathcal B'$.  Clearly, $B_l =\bigcup\mathcal B$ and the family $\mathcal B$ is countable. One can check that for all $f_i \in \mathcal F$ and for all  $B\in\mathcal{B}$  we have $\on{diam}f_i(B)<\varepsilon$.
Since $X  =  \bigcup\{B_l: l \in \mathbb{N}\}$, we are done.
\end{proof}

\section{Further results}

The notion of a cliquish function was introduced by Thielman in \cite{Th}.

\begin{df} \cite{Th}
Let $X$ be a topological space and $(Y,d)$ be a metric space.
A function $f\colon X  \to Y$ is called cliquish at $x \in X$ if for every $\varepsilon > 0$ and for
each open neighbourhood $U$ of $x$ there is a nonempty open set $G \subset U$ such that
$d(f(x), f(y)) < \varepsilon$ for all $x, y  \in G$. The function $f\colon X \to Y$ is cliquish if it is cliquish at every point $x \in X$.
\end{df}

In \cite{Ho} the following result was proved. (We preserve our
definition of a Baire~1 function in a more general case.)

\begin{proposition} \label{h2}
Let $X$ be a Baire space and $(Y,d)$ be a separable metric space.
If $f\colon X \to Y$ be a Baire~1 function, then $f$ is cliquish.
\end{proposition}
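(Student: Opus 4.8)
The plan is to reduce cliquishness to an oscillation statement about nonempty open sets and then apply the Lebesgue property together with the Baire category theorem. First I would observe that the explicit point $x$ in the definition of cliquishness is harmless: $f$ is cliquish if and only if for every nonempty open set $U\subseteq X$ and every $\varepsilon>0$ there is a nonempty open set $G\subseteq U$ with $\on{diam} f(G)\le\varepsilon$ (apply this to $U$ ranging over the neighbourhoods of a given $x$, and conversely).

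Next, since $(Y,d)$ is separable and $f$ is Baire~1, Theorem \ref{HL1} (implication (1)~$\Rightarrow$~(2)) tells us that $f$ has the Lebesgue property: for the given $\varepsilon>0$ there is a countable cover $(X_i)_{i\in\mathbb N}$ of $X$ by closed sets with $\on{diam} f(X_i)\le\varepsilon$ for every $i$. Now fix a nonempty open set $U\subseteq X$. Then $U=\bigcup_{i\in\mathbb N}(X_i\cap U)$, where each $X_i\cap U$ is closed in $U$. Since $U$, being a nonempty open subspace of the Baire space $X$, is itself a nonempty Baire space, the sets $X_i\cap U$ cannot all be nowhere dense in $U$; hence some $X_{i_0}\cap U$ has nonempty interior in $U$, which is also open in $X$ because $U$ is open. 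Taking $G$ to be this interior gives a nonempty open $G\subseteq X_{i_0}\cap U\subseteq U$ with $\on{diam} f(G)\le\on{diam} f(X_{i_0})\le\varepsilon$, as required.

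There is essentially no serious obstacle here; the only points needing the usual (routine) care are the fact that an open subspace of a Baire space is again a Baire space, and the correct use of the separability of $Y$ in order to invoke Theorem \ref{HL1}. If one wants slightly more than cliquishness, the same covers taken for $\varepsilon=1/n$ show that $\bigcap_{n}\bigcup_{i}\on{int}X_i^{(n)}$ is a dense $G_\delta$ set of continuity points of $f$, and cliquishness follows from this a fortiori.
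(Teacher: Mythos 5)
Your argument is correct and follows exactly the route the paper intends: the paper itself cites \cite{Ho} rather than giving a proof, but its companion Proposition \ref{h22} (the Lebesgue property plus Baireness of $X$ implies cliquishness) combined with Theorem \ref{HL1} is precisely your decomposition, and your Baire-category step establishing that stronger statement (a closed set in the countable cover must fail to be nowhere dense in $U$, hence contains a nonempty open set) is sound. The only formal point worth noting is that Theorem \ref{HL1} is stated for metric $X$ while Proposition \ref{h2} assumes only a topological Baire space; this is harmless, since the proof of (1)~$\Rightarrow$~(2) there uses only the separability of $Y$ and never the metric on $X$.
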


From the proof of Proposition \ref{h2} given in \cite{Ho} we can derive the following stronger result.

\begin{proposition} \label{h22}
Let $(X,d_X)$ be a Baire metric space and $(Y,d_Y)$ be a  metric space.
If $f\colon X \to Y$ has the Lebesgue property, then $f$ is cliquish.
\end{proposition}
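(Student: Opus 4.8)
The plan is to follow the scheme of the proof of Proposition~\ref{h2}, but to replace the appeal to separability of $Y$ (which in that argument served only to manufacture a Lebesgue-type closed cover) by the hypothesis that $f$ already has the Lebesgue property. So fix $x_0\in X$, $\varepsilon>0$, and an open neighbourhood $U$ of $x_0$; the task is to produce a nonempty open set $G\subseteq U$ with $d_Y(f(u),f(v))<\varepsilon$ for all $u,v\in G$ (note that $x_0$ itself need not lie in $G$).

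First I would apply the Lebesgue property of $f$ to the number $\varepsilon/2$: this yields a countable cover $(X_i)_{i\in\mathbb N}$ of $X$ by closed sets with $\on{diam}f(X_i)\le\varepsilon/2$ for every $i\in\mathbb N$. Intersecting with $U$ gives $U=\bigcup_{i\in\mathbb N}(X_i\cap U)$, a representation of $U$ as a countable union of sets that are closed in the subspace $U$.

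Next I would invoke the elementary fact that an open subspace of a Baire space is itself a Baire space, so $U$ is Baire. Hence the closed-in-$U$ sets $X_i\cap U$ cannot all be nowhere dense in $U$; choose $i_0$ for which $X_{i_0}\cap U$ is not nowhere dense in $U$. Being closed in $U$, it then has nonempty interior in $U$; and because $U$ is open in $X$, that interior, which I call $G$, is also open in $X$. Then $G$ is a nonempty open subset of $X$ with $G\subseteq X_{i_0}\cap U\subseteq U$, and $\on{diam}f(G)\le\on{diam}f(X_{i_0})\le\varepsilon/2<\varepsilon$, which gives $d_Y(f(u),f(v))<\varepsilon$ for all $u,v\in G$. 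Since $x_0$, $\varepsilon$, and $U$ were arbitrary, $f$ is cliquish.

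The only delicate point — and thus the ``main obstacle'', though it is a routine one — is the interchange between being nowhere dense (respectively having nonempty interior) in the subspace $U$ and in $X$; this is valid precisely because $U$ is open in $X$. Combined with the standard fact that open subspaces of Baire spaces are Baire, this is what makes the Baire-category step legitimate. Everything else is bookkeeping with diameters and can be carried out without difficulty.
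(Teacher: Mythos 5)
Your proof is correct and follows essentially the route the paper intends: the paper gives no explicit argument but says the result is derived from the proof of Proposition~\ref{h2} in \cite{Ho}, which is exactly this Baire-category argument applied to the closed cover supplied by the Lebesgue property (there the cover is manufactured from separability of $Y$, here it is the hypothesis). Your handling of the relativization to the open subspace $U$ and the $\varepsilon/2$ adjustment to pass from $\le$ to $<$ are both sound.
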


It is known (see \cite{E}) that, if $X$ is a Baire space and $(Y,d)$ is a metric space, then $f\colon X \to Y$ is cliquish if and only if the set of continuity points of $f$ is dense in $X$.
A function with this latter property is called pointwise discontinuous, cf. \cite[p. 105]{Ku}.

In \cite[Def. 4]{BHH}, the notion of equi-cliquishness of a sequence of functions from a topological space into a metric space was introduced.
We suggest the following definition for families of functions.

\begin{df}
Let $X$ be a topological space and $(Y,d)$ be a metric space. A family $\mathcal F$ of
functions from $X$ to $Y$ is called equi-cliquish at $x \in X$ if for every $\varepsilon > 0 $
and each neighbourhood $U  \subset X$  of $x$ there is a nonempty open set $G \subset U$  such that
$\on{diam} f(G)\le \varepsilon$ for all $f \in \mathcal F$.
A family $\mathcal F$ of functions from $X$ to $Y$ is equi-cliquish if it is equi-cliquish at every $x \in X$.
\end{df}

For a family $\mathcal F$ of functions from a topological space $X$ to a metric space $(X,d)$ we denote by $EC(\mathcal F)$ the set of equi-continuity points of $\mathcal F$. It is known that this set is
$G_\delta$ since $EC(\mathcal F)=\bigcap_{n \in \mathbb N} O(1/n)$ where
$$
O(\varepsilon) = \bigcup\{U: U \subset X,\; U \mbox{ nonempty open, }\on{diam}f(U) < \varepsilon \mbox{ for every } f \in \mathcal F\}
$$
is open for each $\varepsilon >0$; see \cite[Lemma 3.2]{FHT}.

Note that in \cite{FHT} the authors defined the notion of almost equicontinuity for a family of continuous functions, which is  similar to the notion of equi-cliquishness.

\begin{df} \cite{FHT} Let $X$ and $(M, d)$  be a Hausdorff, completely regular space and a metric space, respectively, and let
$C(X,M)$ denote the set of all continuous functions from $X$ to $M$. A subset $G \subset C(X,M)$ is said to be
almost equicontinuous if $G$ is equicontinuous on a dense subset of $X$.
\end{df}

\begin{proposition} \label{new}
Let $(X,d_X)$ be a Baire metric space and $(Y,d_Y)$  be a  metric space. If $\mathcal F$ is an equi-cliquish  family of functions from $X$ to $Y$, then the set $EC(\mathcal F)$ is dense $G_\delta$, and thus it is comeager.
%there is a dense $G_\delta$-set  $H \subset X$ such that $\mathcal F$ is equicontinuous at every point $x \in H$.
\end{proposition}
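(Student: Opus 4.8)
The plan is to lean on the material already recorded just before the statement: the equality $EC(\mathcal F)=\bigcap_{n\in\mathbb N}O(1/n)$ and the fact that each $O(\varepsilon)$ is open. Granting this, the whole proposition reduces to showing that $O(\varepsilon)$ is \emph{dense} for every $\varepsilon>0$; once this is done, $EC(\mathcal F)$ is a countable intersection of dense open subsets of the Baire space $X$, hence dense, and it is $G_\delta$ by the displayed formula, so it is comeager.

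First I would fix $\varepsilon>0$ together with an arbitrary nonempty open set $V\subset X$, and choose any point $x\in V$. Applying equi-cliquishness of $\mathcal F$ at $x$, with the neighbourhood $V$ and with $\varepsilon/2$ in place of $\varepsilon$, yields a nonempty open set $G\subset V$ such that $\on{diam}f(G)\le\varepsilon/2<\varepsilon$ for all $f\in\mathcal F$. By the definition of $O(\varepsilon)$ this means $G\subset O(\varepsilon)$, whence $V\cap O(\varepsilon)\neq\emptyset$. As $V$ was an arbitrary nonempty open set, $O(\varepsilon)$ is dense in $X$.

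It then remains only to assemble the conclusion: since $X$ is Baire and each $O(1/n)$ is dense and open, $EC(\mathcal F)=\bigcap_{n\in\mathbb N}O(1/n)$ is dense; being a countable intersection of open sets, it is $G_\delta$; and a dense $G_\delta$ subset of a Baire space is comeager. There is no genuine obstacle in the argument — the one point needing attention is the mismatch between the strict inequality $\on{diam}f(U)<\varepsilon$ built into $O(\varepsilon)$ and the non-strict inequality $\on{diam}f(G)\le\varepsilon$ delivered by equi-cliquishness, which is why the property is invoked at level $\varepsilon/2$; and one must make sure the open set supplied by the definition can be taken inside the prescribed neighbourhood $V$, which is precisely what that definition guarantees.
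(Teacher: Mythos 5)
Your proof is correct and follows exactly the same route as the paper: reduce to the density of each $O(\varepsilon)$ via the identity $EC(\mathcal F)=\bigcap_n O(1/n)$, then apply the Baire property of $X$. The paper merely asserts the density of $O(\varepsilon)$ in one line, whereas you supply the (correct) verification, including the harmless $\varepsilon/2$ adjustment reconciling the strict inequality in the definition of $O(\varepsilon)$ with the non-strict one in the definition of equi-cliquishness.
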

\begin{proof}
Since $\mathcal F$ is equi-cliquish, the set $O(\varepsilon)$ given above is dense in $X$. Since $X$ is a Baire space, the $G_\delta$ set $EC(\mathcal F) = \bigcap_{n \in \mathbb N} O(1/n)$ is dense and thus it is comeager.
%It is easy to verify that the family $\mathcal F$ is equicontinuous at every point $x \in H$.
\end{proof}

\begin{proposition} \label{h11}
Let $(X,d_X)$ be a Baire metric space and $(Y,d_Y)$  be a  metric space. If $\mathcal F$ is an equi-Lebesgue  family of functions from $X$ to $Y$, then $\mathcal F$ is equi-cliquish.
\end{proposition}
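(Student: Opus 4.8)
The plan is to show that an equi-Lebesgue family is equi-cliquish by working locally inside an arbitrary nonempty open set and using the Baire category property of $X$. Fix $\varepsilon > 0$, a point $x \in X$, and an open neighbourhood $U$ of $x$; I must produce a nonempty open $G \subset U$ with $\on{diam} f(G) \le \varepsilon$ for all $f \in \mathcal F$. Since $\mathcal F$ is equi-Lebesgue, there is a countable cover $(X_i)_{i \in \mathbb N}$ of $X$ by closed sets with $\on{diam} f(X_i) \le \varepsilon$ for all $i$ and all $f \in \mathcal F$. Then the sets $X_i \cap \overline U$ form a countable closed cover of the nonempty closed set $\overline U$; since $X$ is a Baire metric space, the closed subspace $\overline U$ — or rather the open set $U$ regarded as a Baire space in its own right — cannot be covered by countably many nowhere dense sets.

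The key step is the Baire-category argument: because $U$ (open in a Baire space, hence Baire) is covered by the countably many relatively closed sets $X_i \cap U$, at least one of them, say $X_{i_0} \cap U$, has nonempty interior relative to $U$, hence relative to $X$. Let $G$ be that interior: a nonempty open set with $G \subset X_{i_0} \cap U \subset U$. Then for every $f \in \mathcal F$ we have $f(G) \subset f(X_{i_0})$, so $\on{diam} f(G) \le \on{diam} f(X_{i_0}) \le \varepsilon$. This is exactly the condition required for equi-cliquishness of $\mathcal F$ at $x$, and since $x$ was arbitrary, $\mathcal F$ is equi-cliquish.

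The main obstacle — really the only delicate point — is making sure the Baire-category step is applied to the right space. One should not try to use that $\overline U$ is a Baire space (a closed subset of a Baire space need not be Baire); instead one uses that every nonempty open subspace of a Baire space is itself a Baire space, so $U$ with the subspace topology is Baire, and a Baire space is not the union of countably many closed sets with empty interior. From $U = \bigcup_i (X_i \cap U)$ one concludes some $X_{i_0} \cap U$ is not nowhere dense in $U$; being relatively closed, it therefore has nonempty relative interior, which (as $U$ is open in $X$) is open in $X$. Everything else is the routine monotonicity of diameters under restriction and inclusion, so the proof is short once this point is handled correctly.
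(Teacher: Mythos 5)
Your proof is correct and follows essentially the same route as the paper's: cover a neighbourhood of $x$ by the traces of the closed sets $X_i$ from the equi-Lebesgue condition and apply a Baire-category argument to extract a nonempty open $G$ inside some $X_{i_0}$. The only (harmless) difference is that you work with the relatively closed sets $X_i\cap U$ in the Baire subspace $U$, whereas the paper first writes the open set $U$ as an $F_\sigma$ in the metric space $X$ and intersects with the $X_i$ to get sets closed in $X$; both versions handle the delicate category step correctly.
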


\begin{proof}
Let $x \in X$ and $U \subset X$ be an open neighbourhood of $x$ and $\varepsilon > 0$. Since  the family $\mathcal F$ is equi-Lebesgue, there is a cover $(X_i)_{i \in \mathbb N}$  of $X$ consisting of closed sets such that $\on{diam}f(X_i) \le \varepsilon$ for all $i \in \mathbb N$ and $f \in \mathcal F$. The set $U$ is $F_\sigma$. Let $\{F_n: n \in \mathbb N\}$ be a family of closed sets in $X$ such that $U = \bigcup_{n \in \mathbb N} F_n$. Then
$$U = \left(\bigcup_{n \in \mathbb N} F_n\right) \cap \left(\bigcup_{i \in \mathbb N} X_i\right) = \bigcup_{n \in \mathbb N} \bigcup_{i \in \mathbb N} (F_n \cap X_i).$$
Since $X$ is a Baire space, there are $n, i \in \mathbb N$ and nonempty open set $V \subset U$ such that $V \subset F_n \cap X_i$. Thus $\on{diam}f(V) \le \varepsilon$ for every $f \in \mathcal F$. Hence $\mathcal F$ is equi-cliquish at $x \in X$. Since $x$ was arbitrary, $\mathcal F$ is equi-cliquish.
\end{proof}

\begin{corollary} \label{hl30}
Let $(X,d_X)$ be a Baire metric space and $(Y,d_Y)$  be a  metric space. Then the set $EC(\mathcal F)$ is dense  $G_\delta$  for every family $\mathcal F$ of
equi-Lebesgue  family of functions from $X$ to $Y$. If additionally,
$(X,d_X)$ is separable, the set $EC(\mathcal F)$ is dense $G_\delta$ for every equi-Baire~1 family
of functions from $X$ to $Y$.
\end{corollary}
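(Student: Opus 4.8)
The plan is to chain together the two preceding propositions and Fact~\ref{Ali}, so that essentially nothing new has to be proved. For the first assertion, I would start from an equi-Lebesgue family $\mathcal F$ of functions from $X$ to $Y$ and invoke Proposition~\ref{h11}: since $X$ is a Baire metric space, $\mathcal F$ is equi-cliquish. Then Proposition~\ref{new}, applied to the equi-cliquish family $\mathcal F$ on the Baire metric space $X$, gives at once that $EC(\mathcal F)$ is dense $G_\delta$ (and hence comeager). This settles the equi-Lebesgue case with no computation.

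For the second assertion, I would additionally assume $X$ separable and take an arbitrary equi-Baire~1 family $\mathcal F$. By Fact~\ref{Ali}(ii), separability of $X$ forces $\mathcal F$ to be equi-Lebesgue. Now the first part applies verbatim, yielding that $EC(\mathcal F)$ is dense $G_\delta$. So the whole proof is the composition of the implications
\[
\text{equi-Baire~1} \;\xrightarrow{X \text{ separable}}\; \text{equi-Lebesgue} \;\xrightarrow{X \text{ Baire}}\; \text{equi-cliquish} \;\Longrightarrow\; EC(\mathcal F)\ \text{dense }G_\delta .
\]

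There is no real obstacle here: the statement is a corollary precisely because each arrow has already been established in the excerpt (Fact~\ref{Ali}(ii), Proposition~\ref{h11}, and Proposition~\ref{new} respectively), and the $G_\delta$-ness of $EC(\mathcal F)$ is the general fact recorded before Proposition~\ref{new} via $EC(\mathcal F)=\bigcap_{n\in\mathbb N}O(1/n)$. The only point worth a sentence of care is bookkeeping the hypotheses: Proposition~\ref{h11} and Proposition~\ref{new} both need $X$ to be a \emph{Baire} metric space (which is assumed throughout the corollary), while separability is used \emph{only} to pass from equi-Baire~1 to equi-Lebesgue in the second half. I would therefore phrase the write-up so that the first paragraph never mentions separability, and the second paragraph reduces to the first by one application of Fact~\ref{Ali}(ii).
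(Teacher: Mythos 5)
Your proposal is correct and matches the paper's proof exactly: the first assertion is obtained by combining Proposition~\ref{h11} with Proposition~\ref{new}, and the second by reducing to the first via Fact~\ref{Ali}(ii). Your additional remark on where separability is (and is not) needed is accurate bookkeeping but introduces nothing beyond the paper's argument.
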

\begin{proof}
For the former assertion we use Propositions \ref{new} and \ref{h11}. The latter assertion follows
from Fact \ref{Ali} (ii).
\end{proof}

Of course, in general, we cannot expect that in Corollary \ref{hl30} an equi-Baire 1  family of functions will be equicontinuous on the whole space.

\begin{example}
Let $X = [0,1]$ and $Y = \mathbb R $ be equipped  with the usual Euclidean metric. Define $f\colon X \to Y$ by $f(x) = \sin(1/x)$ if $x \ne 0$ and $f(0) = 0$. The family $\{f\}$ is equi-Baire 1, however it cannot be equicontinuous at $0$. Example~3.8 in \cite{Al} presents an equi-Baire1 family of continuous functions from $X$ to $Y$ which is not equicontinuous at $0$.
\end{example}

Natural questions arise whether a family $\mathcal F$ of functions from a metric space $(X,d_X)$ to a metric space $(Y,d_Y)$, with the set $H=EC(\mathcal F)$ dense in $(X,d_X)$, is equi-Baire~1 or, whether the functions $f|H$ for $f\in\mathcal F$ can be extended to an equi-Baire~1 family on $X$.
Firstly, we show that the answer to the first question is negative.

\begin{example}
S. Marcus in his paper \cite{Ma} showed that there is a quasicontinuous function $f$ from
the interval $[0,1]$ to $\mathbb R$ which is not Lebesgue measurable. Thus $f$ cannot be a Baire~1 function. It is known that every quasicontinuous function from a Baire space into a metric space is continuous at points of a dense $G_\delta$ set (see \cite{HHM}).
\end{example}

We also have a positive answer to our second question in a full generality.

\begin{theorem} \label{Last}
Let $(X,d_X)$ be a separable metric space and $(Y,d_Y)$ be a separable complete metric space.
Let $H\subset X$ be a nonempty $G_\delta$ set, and $\mathcal F$ be an equi-continuous family of functions
from $H$ to $(Y,d_Y)$. Then all functions in $\mathcal F$ can be extended to an equi-Baire~1 family of functions from $X$ to $Y$.
\end{theorem}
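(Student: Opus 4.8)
The plan is to bundle the family $\mathcal F$ into a single map, reduce to an extension statement for that one map, and then carry out the extension using the $G_\delta$-structure of $H$.

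\textbf{Bundling the family.} On $Y^{\mathcal F}$ put the truncated supremum metric $\rho(\varphi,\psi)=\min\{1,\sup_{f\in\mathcal F}d_Y(\varphi(f),\psi(f))\}$; since $(Y,d_Y)$ is complete, $(Y^{\mathcal F},\rho)$ is complete. Let $E\colon H\to Y^{\mathcal F}$ be the evaluation map $E(x)=(f(x))_{f\in\mathcal F}$. For every $A\subseteq X$ and every $\varepsilon\in(0,1)$ one has $\on{diam}_\rho E(A)\le\varepsilon$ if and only if $\on{diam}_{d_Y}f(A)\le\varepsilon$ for all $f\in\mathcal F$; in particular $E$ is continuous on $H$ precisely because $\mathcal F$ is equi-continuous there. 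As $H$ is separable, $E(H)$ is separable, so its $\rho$-closure $Z$ is a separable complete, i.e.\ Polish, space. If $\overline E\colon X\to Z$ is any map extending $E$ and having the Lebesgue property, then the coordinate functions $\widetilde f(x):=\overline E(x)(f)$ map $X$ into $Y$, satisfy $\widetilde f|H=f$, and, by the equivalence above, any closed cover of $X$ witnessing the Lebesgue property of $\overline E$ also witnesses that $\{\widetilde f:f\in\mathcal F\}$ is equi-Lebesgue; hence, by Fact~\ref{Ali}(i), this family is equi-Baire~1. Thus the theorem reduces to the single-map statement:

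$(\star)$ \emph{A continuous map $E$ from a nonempty $G_\delta$ subset $H$ of a separable metric space $X$ into a Polish space $(Z,d_Z)$ extends to a map $\overline E\colon X\to Z$ having the Lebesgue property.}

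\textbf{Sketch of $(\star)$.} Write $H=\bigcap_{n\ge1}G_n$ with $(G_n)$ a decreasing sequence of open sets, set $G_0:=X$, and fix $z_0\in Z$. For $x\notin H$ let $n(x)=\max\{n\ge0:x\in G_n\}$, which is finite. Define $\overline E(x)=E(x)$ for $x\in H$, $\overline E(x)=z_0$ for $x\in X\setminus\overline H$, and $\overline E(x)=E(p(x))$ for $x\in\overline H\setminus H$, where $p(x)\in H$ is chosen with $d_X(x,p(x))<2^{-n(x)}$ (possible since $x\in\overline H$, and one may in addition keep $p(x)$ inside $G_{n(x)}$). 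To verify the Lebesgue property, fix $\varepsilon>0$: by continuity of $E$ and the Lindel\"of property of $H$, cover $H$ by countably many balls $B(z_i,r_i)$, $z_i\in H$, with $\on{diam}_{d_Z}E(B(z_i,3r_i)\cap H)\le\varepsilon$; the closed set $\overline{B(z_i,r_i)}$ then maps, under $\overline E$, every point of $H$ and every point $x\in\overline H\setminus H$ with $2^{-n(x)}<2r_i$ into a set of $d_Z$-diameter $\le\varepsilon$. What remains to be covered lies in $(X\setminus\overline H)\cup\bigl\{x\in\overline H\setminus H:2^{-n(x)}\ge 2r_i\ \text{for the pertinent }i\bigr\}$; the first part is open, hence a countable union of closed sets on each of which $\overline E\equiv z_0$, while the second part meets only finitely many ``coarse'' filtration levels per index $i$, and is to be exhausted by a finer stratification of $\overline H\setminus H$ along the shells $G_{n-1}\setminus G_n$ together with the position of $p(x)$. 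Assembling all the resulting closed sets gives the cover required by the Lebesgue property.

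\textbf{The main obstacle.} The genuine difficulty is exactly this last step — controlling $\overline E$ on $\overline H\setminus H$. There $\overline E$ is forced to be discontinuous, since $E$ need not extend continuously across a point of $\overline H\setminus H$ (think of $\sin(1/x)$ on $(0,1)$), so no continuous extension is available and the closed cover has to be produced by hand; moreover one must organize it so that the auxiliary points $p(x)$ respect \emph{simultaneously} the fixed filtration scale $2^{-n(x)}$ and the a priori unknown modulus of continuity of $E$ on $H$, since $\varepsilon$ is only revealed afterwards. The point of the $G_\delta$-filtration $(G_n)$ is that points of $\overline H\setminus H$ reach $H$ only level by level, so that at any given tolerance $\varepsilon$ only a bounded initial block of levels is troublesome and each contributes only finitely many closed sets. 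The hypotheses enter precisely here: separability of $X$ gives the Lindel\"of property, completeness of $Y$ is what makes $Z$ Polish, and the $G_\delta$-character of $H$ is what powers the level-by-level construction.
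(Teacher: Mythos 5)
Your reduction is attractive and genuinely different from the paper's argument: bundling $\mathcal F$ into the evaluation map $E\colon H\to (Y^{\mathcal F},\rho)$ correctly converts equi-continuity into continuity of $E$ and equi-Lebesgue into the Lebesgue property of an extension $\overline E$, and the target $Z=\overline{E(H)}$ is indeed Polish (it even makes the separability hypothesis on $Y$ redundant). The single-map statement $(\star)$ you reduce to is true --- it is exactly Kuratowski's extension theorem (a continuous map from a $G_\delta$ subset of a metric space into a Polish space extends to a Baire~1 map, hence, by Theorem~\ref{HL1}, to a map with the Lebesgue property), and the paper's own proof is essentially a rerun of Kuratowski's argument carried out uniformly over $f\in\mathcal F$: it approximates each $f$ by countably-valued step maps $g_n^f$ constant on the pieces of a common $F_\sigma$ partition of $H$, extends the partition to $X$, and passes to a uniform limit using completeness of $Y$. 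Had you simply invoked \cite[pp.~434--435]{Ku} for $(\star)$, your proof would be complete and arguably cleaner.

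However, your own proof of $(\star)$ has a genuine gap, and it is precisely the one you flag as ``the main obstacle.'' Defining $\overline E(x)=E(p(x))$ on $\overline H\setminus H$ via an arbitrary selection $p(x)\in H\cap S(x,2^{-n(x)})$ does not work: on a single shell $S_n=(\overline H\setminus H)\cap(G_n\setminus G_{n+1})$, which can be uncountable, the scale $2^{-n}$ is a fixed positive number independent of the $\varepsilon$ revealed later, so $E(H\cap S(x,2^{-n}))$ may have diameter bounded below (take $X=\mathbb R$, $H$ a dense $G_\delta$ with uncountable complement, $E$ the inclusion $H\hookrightarrow\mathbb R$). Your proposed remedy --- stratifying $S_n$ further ``by the position of $p(x)$'' --- produces sets of the form $\{x\in S_n: p(x)\in B_j\}$, and since $p$ is an arbitrary selection these sets need not be closed, $F_\sigma$, or even Borel; for a bad choice of $p$ the map $\overline E$ fails to be Baire~1 at all, so no assembly of ``resulting closed sets'' can witness the Lebesgue property. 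The missing idea is to avoid a pointwise selection altogether and instead replace $E$ near $\overline H\setminus H$ by countably-valued approximations that are constant on the members of an $F_\sigma$ partition extended from $H$ to $X$ (using the $G_\delta$-structure of $H$), then take a uniform limit --- which is what Kuratowski's proof, and the paper's, actually do.
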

\begin{proof}
We use ideas from \cite[pp. 434--435]{Ku} with $\alpha=1$. Fix $\varepsilon>0$. Then for each $x\in H$
there is an open set $O_x$ in $H$ such that $x\in O_x$ and $\on{diam} f(O_x)<\varepsilon/2$ for every $f\in\mathcal F$. Since $X$ has a countable base, we can suppose that there are open sets $G_1, G_2, \dots$ in $H$ such that $H=\bigcup_{n\in\mathbb N}G_n$ and $\on{diam} f(G_n)<\varepsilon/2$
for all $f\in\mathcal F$ and $n\in\mathbb N$. There is a countable discrete set $I\subset Y$
such that for every $y\in Y$ there is $z\in I$ such that $d_Y(y,z)<\varepsilon/2$.

For all $f\in\mathcal F$ and $i\in\mathbb N$, pick $y_i^f\in I\cap S(f(G_i),\varepsilon/2)$. Then for every
$f\in\mathcal F$ let $g^f\colon h\to Y$ be defined as follows: $g^f(x)=y_1^f$ if $x\in G_1$, and $g^f(x)=y_k^f$ if $x\in G_k\setminus\bigcup_{i<k}G_i$ and $k>1$. It is easy to check that
$g^f\colon H\to Y$ is Baire~1 and $d_Y(f(x),g^f(x))<\varepsilon$ for each $x\in H$.

Now, for every $f\in\mathcal F$ and each $n\in\mathbb N$, let $g_n^f\colon H\to Y$ be the function $g^f$ defined for $\varepsilon =1/2^{n+1}$. It is visible that $(g_n^f)_{n\in\mathbb N}$ converges uniformly to $f$ on $H$.
Note that for all $n,k\in\mathbb N$ and every $f\in\mathcal F$ we have
$$
d_Y(g_n^f(x),g_k^f(x))\le 1/2^{k}\;\;\mbox{for each } x\in H \mbox{ if }n\ge k.
$$
By the construction, the image $g_n^f(H)$ is a countable discrete set, for all $f\in\mathcal F$ and $n\in\mathbb N$.

Following the
proof in \cite{Ku}, we can extend every $g_n^f$ to a Baire~1 function
$h_n^f\colon X\to g_n^f(H)$ in such a way that
\begin{equation} \label{equa2}
d_Y(h_n^f(x),h_k^f(x))\le 1/2^{k}\;\;\mbox{for each } x\in H \mbox{ if }n\ge k.
\end{equation}
Moreover, given $n\in\mathbb N$, all the functions $h_n^f$ with $f\in\mathcal F$, are defined in the same way in the sense that $X=\bigcup_{i\in\mathbb N}F_i^n$ where $F_i^n$ is an $F_\sigma$ set in $X$  and
$|h_n^f(F_i^n)|=1$ for each $i\in\mathbb N$. Since $(Y, d_Y)$ is complete, condition (\ref{equa2})
implies that, for every $f\in\mathcal F$, the (uniform) limit $\lim_n h_n^f(x)=f^*(x)$ exists for all $x\in X$ which defines the extension $f^*\colon X\to Y$ of $f$. Also, if $n\to\infty$ in
(\ref{equa2}), we obtain $d_Y(f^*(x), h_k^f(x))\le 1/2^{k}$ for all $x\in X$.

We will prove that $\{ f^*\colon f\in\mathcal F\}$ is equi-Lebesgue. This yields the assertion by  Fact 3.1.(i) which claims that every equi-Lebesgue family is equi-Baire. It is enough to show that,
for each $k\in\mathbb N$, we have $\on{diam} f^*(F_i^k)\le 1/2^{k-1}$ for all $i\in\mathbb N$ and $f\in\mathcal F$. So, fix $k,i\in\mathbb N$ and $f\in\mathcal F$. For any $x,x'\in F_i^k$ we have
$$d_Y(f^*(x),f^*(x'))\le d_Y(f^*(x), h_k^f(x))+d_Y(h_k^f(x), h_k^f(x'))+d_Y(f^*(x'), h_k^f(x'))\le 1/2^{k-1}$$
where $d_Y(h_k^f(x), h_k^f(x'))=0$ since $|h_k^f(F_i^k)|=1$.
\end{proof}

\bibliographystyle{amsplain}

\end{document}